\newcommand{\bfI}{\boldsymbol I}
\newcommand{\bfbeta}{\boldsymbol \beta}
\newcommand{\mcK}{\mathcal{K}}
\newcommand{\mcE}{\mathcal{E}}
\newcommand{\mcN}{\mathcal{N}}
\newcommand{\mcF}{\mathcal{F}}
\newcommand{\mcA}{\mathcal{A}}
\newcommand{\mcT}{\mathcal{T}}
\newcommand{\mcH}{\mathcal{H}}
\newcommand{\lp}{\left(}
\newcommand{\rp}{\right)}
\newcommand{\tn}{|\mspace{-1mu}|\mspace{-1mu}|}
\newcommand{\Gammah}{{\Gamma_h}}
\newcommand{\nablas}{\nabla_\Gamma}
\newcommand{\nablash}{\nabla_{\Gamma_h}}
\newcommand{\divs}{\text{div}_\Gamma}
\newcommand{\divsh}{\text{div}_\Gammah}
\newcommand{\IR}{\mathbb{R}}
\newcommand{\Ps}{{P}_\Gamma}
\newcommand{\Psh}{{P}_\Gammah}
\numberwithin{equation}{section}
\newtheorem{lem}{Lemma}[section]
\newtheorem{prop}{Proposition}[section]
\newtheorem{thm}{Theorem}[section]
\newtheorem{rem}{Remark}[section]
\newenvironment{proof}{\noindent \newline {\bf Proof.}}
{\hfill \mbox{\fbox{} } \newline}
\begin{document}
\title{\bf Stabilized CutFEM for the Convection 
Problem on Surfaces
\thanks{This research was supported in part by the Swedish Foundation for Strategic Research Grant No.\ AM13-0029 (PH,MGL), the Swedish Research Council Grants Nos.\ 2011-4992 (PH), 2013-4708 (MGL), 
and 2014-4804 (SZ),
the Swedish Research Programme Essence (MGL, SZ), and EPSRC, UK, 
Grant Nr. EP/J002313/1. (EB)}
}
\author{Erik Burman
\footnote{Department of Mathematics, University of
Sussex, Brighton, BN1 9RF United Kingdom} 
\mbox{ }
 Peter Hansbo
\footnote{Department of Mechanical Engineering, J\"onk\"oping University,
SE-55111 J\"onk\"oping, Sweden.} 
\mbox{ }
Mats G.\ Larson
\footnote{Department of Mathematics and Mathematical Statistics, Ume{\aa} University, SE-90187 Ume{\aa}, Sweden} 
\mbox{ }
Sara Zahedi
\footnote{Department of Mathematics, KTH Royal Institute of Technology,
SE-100 44 Stockholm, Sweden}
}
\maketitle
\begin{abstract} We develop a stabilized cut finite element 
method for the convection problem on a surface based on continuous 
piecewise linear approximation and gradient jump stabilization 
terms. The discrete piecewise linear surface cuts through a background mesh consisting of tetrahedra in an arbitrary way 
and the finite element space consists of piecewise linear continuous functions defined on the 
background mesh. The variational form involves integrals on the surface 
and the gradient jump stabilization term is defined on the full faces of the tetrahedra. The stabilization term serves two purposes: first 
the method is stabilized and secondly the resulting linear system 
of equations is algebraically stable. We establish stability results that are analogous to the standard meshed flat case and 
prove $h^{3/2}$ order convergence in the natural norm 
associated with the method and that the full gradient enjoys $h^{3/4}$ order of convergence in $L^2$. We also show that the condition number of the stiffness matrix is bounded by $h^{-2}$. Finally, our results are verified by numerical examples.
\end{abstract}

\section{Introduction} In this contribution we develop a 
stabilized cut finite element for stationary convection on 
a surface embedded in $\IR^3$. The method is based on 
a three dimensional background mesh consisting of 
tetrahedra and a piecewise linear approximation of the surface. 
The finite element space is the continuous piecewise linear 
functions on the background mesh and the bilinear form defining the 
method only involves integrals on the surface. In addition we 
add a consistent stabilization term which involves the 
normal gradient jump on the full faces of the background 
mesh. In the case of the Laplace-Beltrami operator the idea 
of using the restriction of a finite element space to the 
surface was developed in \cite{OlReGr09}, and a stabilized 
version was proposed and analyzed in \cite{BuHaLa14}.

Surprisingly, we show here that for the convection problem 
the properties of cut finite element method completely 
reflects the properties of the corresponding method on 
standard triangles or tetrahedra, see the analysis for the 
latter in \cite{BuHa04}. In particular, we prove discrete 
stability estimates in the natural 
energy norm, involving the $L^2$ norm of the solution and 
$h^{1/2}$ times the $L^2$ norm of the streamline derivative 
where $h$ is the meshsize, and corresponding optimal a priori 
error estimates of order $h^{3/2}$. Furthermore, we also 
show an error estimate of order $h^{3/4}$ for the error 
in the full gradient, which is better than the streamline 
diffusion method on triangles, and is also in line with 
\cite{BuHa04}. The stabilization term is key 
to the proof of the discrete stability estimates and enables us 
to work in the natural norms corresponding to those used in 
the standard analysis on triangles or tetrahedra. The analysis 
utilizes a covering argument first developed in \cite{BuHaLa14}, 
which essentially localizes the analysis to sets of elements, 
with a uniformly bounded number of elements, that together 
has properties similar to standard finite elements. The stabilization 
term also leads to an algebraically stable linear system of 
equations and we prove that the condition number is bounded by $h^{-2}$.

We note that similar stabilization terms have recently been 
used for stabilization of cut finite element methods for time 
dependent problems in \cite{HaLaZa15}, bulk domain 
problems involving standard boundary and interface 
conditions \cite{BuHa10}, \cite{BuHa12}, \cite{HaLaZa14}, 
and \cite{MaLaLoRo12}, and for coupled bulk-surface problems 
involving the Laplace-Beltrami operator on the surface in \cite{BuHaLaZa14}. We also mention \cite{BuHaLaMa15} 
where a discontinuous cut finite element method for the 
Laplace-Beltrami operator was developed. None of these 
references consider the convection problem on the surface. 
For convection problems streamline diffusion stabilization 
was used in \cite{OlReXu14b} and methods on evolving 
surfaces were studied in \cite{OlRe14} and \cite{OlReXu14a}.

Finally, we refer to \cite{DeDzElHe10}, \cite{Dz88}, 
\cite{DzEl08}, and \cite{DzEl13} for general background on 
finite element methods for partial differential equations 
on surfaces.

The outline of the reminder of this paper is as follows: In 
Section 2 we formulate the model problem; in Section 3 we 
define the discrete surface, and its approximation properties, 
and the finite element method; in Section 4 we summarize 
some preliminary results involving lifting of functions 
from the discrete surface to the continuous surface;
in Section 5 we first derive some technical lemmas essentially quantifying the stability induced by the stabilization 
term, and then we derive the key discrete stability estimate; 
in Section 6 we prove a priori estimates; in Section 7 
we prove an estimate of the condition number; and finally 
in Section 8, we present some numerical examples 
illustrating the theoretical results.

\section{The Convection Problem on a Surface}

\subsection{The Surface} Let $\Gamma$ be a surface embedded in $\IR^3$ 
with signed distance function $\rho$ such that the exterior unit 
normal to the surface is given by $n = \nabla \rho$. We let 
$p:\IR^3 \rightarrow \Gamma$ be the closest point mapping. Then there 
is a $\delta_0>$ such that $p$ maps each point in 
$U_{\delta_0}(\Gamma)$ to precisely one point on $\Gamma$, where 
$U_\delta(\Gamma) = \{ x \in \IR^3 : | \rho(x)|  < \delta\}$ is the 
open tubular neighborhood of $\Gamma$ of thickness $\delta$.

\subsection{Tangential Calculus} For each function $u$ on 
$\Gamma$ we let the extension $u^e$ to the neighborhood 
$U_{\delta_0}(\Gamma)$ be defined by the pull back $u^e = u \circ p$. 
For a function $u: \Gamma \rightarrow \IR$ we then define the 
tangential gradient
\begin{equation}
\nablas u = \Ps \nabla u^e
\end{equation}
where $\Ps = I - n \otimes n$, with $n=n(x)$, is the projection 
onto the tangent plane $T_{x}(\Gamma)$. We also define the surface 
divergence 
\begin{equation}
\divs(u) = \text{tr}(u \otimes \nablas ) 
= \text{div}(u^e) - n \cdot (u^e \otimes \nabla)\cdot n 
\end{equation}
where $(u\otimes \nabla)_{ij} = \partial_j u_i$. It can be shown that the tangential derivative does not depend on the particular choice of extension.

\subsection{The Convection Problem on $\boldsymbol{\Gamma}$}
The strong form of the convection problem on $\Gamma$ takes 
the form: find $u:\Gamma \rightarrow \IR$ such that
\begin{alignat}{2}\label{eq:conva}
\beta \cdot \nablas u + \alpha u &= f \qquad &\text{on $\Gamma$}
\end{alignat}  
where $\beta:\Gamma \rightarrow \IR^3$ is a given tangential vector 
field, $\alpha:\Gamma \rightarrow \IR$ and $f:\Gamma \rightarrow \IR$ are given functions. 

\paragraph{Assumption.} The coefficients 
$\alpha$ and $\beta$ are smooth and satisfy 
\begin{equation}\label{eq:assumdivbeta}
0< C \leq \inf_{x\in \Gamma} (\alpha(x) - \frac{1}{2} \divs \beta(x) ) 
\end{equation}
for a positive constant $C$. 

We introduce the Hilbert space $V = \{ v:\Gamma \rightarrow \IR : 
\| v \|^2_V = \|v\|_\Gamma^2 + \|\beta\cdot \nabla v\|_\Gamma < \infty\}$ and the operator $L v = \beta \cdot \nabla v + \alpha v$. We note that using Green's formula and assumption (\ref{eq:assumdivbeta}) we have the estimate
\begin{equation}
(Lv,v)_\Gamma = ((\alpha - \frac{1}{2}\divs \beta)v,v)_\Gamma 
\geq C \|v\|_\Gamma^2 
\end{equation}

\begin{prop}\label{prop:existence} If the coefficients $\alpha$ and $\beta$ satisfy assumption (\ref{eq:assumdivbeta}), then there is a unique 
$u \in V$ such that $L u = f$ for each $f\in L^2(\Gamma)$.   
\end{prop}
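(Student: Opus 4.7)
The plan is to treat uniqueness and existence separately: uniqueness falls straight out of the coerciveness identity already displayed just above the proposition, while existence is obtained by a duality argument with the formal adjoint, in the spirit of the Banach--Ne\v cas--Babu\v ska / Lions lemma.

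For uniqueness, suppose $u\in V$ satisfies $Lu=0$. Pairing with $u$ and using the identity stated in the excerpt,
\[
0 = (Lu,u)_\Gamma = \big((\alpha - \tfrac12 \divs\beta)\,u,u\big)_\Gamma \geq C\,\|u\|_\Gamma^2,
\]
so $u=0$. Note that this identity uses Green's formula on the closed surface $\Gamma$, where no boundary contributions appear, so the inflow/outflow issues familiar from the flat case are absent.

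For existence, I would introduce the formal adjoint $L^\ast v = -\beta\cdot\nablas v + (\alpha - \divs\beta)\,v$ on $V$. The same surface integration by parts gives $(L^\ast v,v)_\Gamma = ((\alpha-\tfrac12\divs\beta)v,v)_\Gamma \geq C\|v\|_\Gamma^2$, whence the a priori estimate
\[
\|v\|_\Gamma \leq C^{-1}\,\|L^\ast v\|_\Gamma \qquad \forall v\in V.
\]
Consequently $L^\ast:V\to L^2(\Gamma)$ is injective with closed range. On this closed range define the linear functional $\Lambda(L^\ast\phi) := (f,\phi)_\Gamma$; the estimate above, combined with Cauchy--Schwarz, gives $|\Lambda(L^\ast\phi)|\leq C^{-1}\|f\|_\Gamma\,\|L^\ast\phi\|_\Gamma$, so $\Lambda$ is well-defined and bounded. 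Hahn--Banach extends it to $L^2(\Gamma)$, and Riesz representation produces a function $u\in L^2(\Gamma)$ with $(u, L^\ast\phi)_\Gamma = (f,\phi)_\Gamma$ for every $\phi\in V$. Testing against smooth $\phi$ and using smoothness of $\alpha,\beta$ shows that $\beta\cdot\nablas u = f - \alpha u$ in the distributional sense on $\Gamma$.

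The main obstacle, and the only step that is not entirely mechanical, is the last one: promoting the weak $L^2$ solution to a $V$-solution. Once the distributional identity $\beta\cdot\nablas u = f-\alpha u$ is in hand, the right-hand side lies in $L^2(\Gamma)$ (because $u\in L^2$ and $\alpha$ is bounded), so $\beta\cdot\nablas u\in L^2(\Gamma)$ and $u\in V$. The small technical point one must check to make this rigorous is that smooth test functions on $\Gamma$ are dense in $V$ with respect to the graph norm and that the distributional directional derivative coincides with $\beta\cdot\nablas u$ for $u\in V$; on a smooth closed surface with smooth $\beta$ this is standard, obtained either by local charts and convolution or via a vanishing-viscosity approximation with the Laplace--Beltrami regularization $-\varepsilon\Delta_\Gamma$, for which Lax--Milgram gives solvability and the above energy estimate provides a uniform bound allowing passage to the limit.
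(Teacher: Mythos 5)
Your argument is correct, but it takes a genuinely different route from the paper. You prove existence by the classical adjoint--duality device: the coercivity of the formal adjoint $L^\ast v = -\beta\cdot\nablas v + (\alpha-\divs\beta)v$ yields the a priori bound $\|v\|_\Gamma \le C^{-1}\|L^\ast v\|_\Gamma$, and Hahn--Banach plus Riesz representation then produce a weak $L^2$ solution, which is promoted to a $V$-solution because $\beta\cdot\nablas u = f-\alpha u$ already lies in $L^2(\Gamma)$. The paper instead solves the associated evolution problem $u_t + \beta\cdot\nablas u + \alpha u = g$ by characteristics for smooth data $g$, uses assumption (\ref{eq:assumdivbeta}) to show $\|u_t(T)\|_\Gamma \le \|g\|_\Gamma e^{-CT}$, deduces that $u(T_n)$ is Cauchy in $V$ and converges to a stationary solution with the stability bound $\|u_g\|_V\lesssim\|g\|_\Gamma$, and finally handles $f\in L^2(\Gamma)$ by density. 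Your approach is shorter, treats $f\in L^2(\Gamma)$ in one stroke without the separate density step, and delivers uniqueness and existence from the same coercivity identity; its price is exactly the technical point you flag, namely that the integration-by-parts identity $(\beta\cdot\nablas v,v)_\Gamma = -\tfrac12((\divs\beta)v,v)_\Gamma$ must be justified for $v$ merely in the graph space $V$ (Friedrichs mollification in charts, or your vanishing-viscosity regularization), which is needed for the uniqueness half; the paper sidesteps this by working with smooth solutions until the very end, though it too states the coercivity identity on $V$ without proof. Both arguments are sound; yours is arguably the more standard functional-analytic treatment of a first-order transport operator on a closed surface, while the paper's gives in addition the quantitative decay estimate (\ref{eq:existd}) and the explicit stability bound (\ref{eq:existe}).
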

\begin{proof} The essential idea in the proof is to consider the corresponding time dependent problem with a 
smooth right hand side and show that the solution exists 
and converges to a solution to the stationary problem as 
time tends to infinity. Then we use a density argument to 
handle a right hand side in $L^2$. 
\paragraph{Smooth Right Hand Side.}
For any $0< T <\infty$ consider the time 
dependent problem: find $u:[0,T]\times \Gamma \rightarrow \IR$, 
such that 
\begin{equation}\label{eq:timedep}
u_t + \beta \cdot \nabla u + \alpha u = g 
\quad \text{on $(0,T] \times \Gamma$},
\qquad u(0) = 0 \quad \text{on $\Gamma$}
\end{equation}
Consider first a smooth right hand side $g$. Using characteristic coordinates we conclude that there is smooth solution 
$u(t)$ to (\ref{eq:timedep}). Next taking the time derivative 
of equation (\ref{eq:timedep}) we find that the solution satisfies the equation
\begin{equation}\label{eq:timedepder}
u_{tt} + \beta \cdot \nabla u_t + \alpha u_t = 0 
\end{equation}
where we used the fact that $\alpha$, $\beta$, and $g$, 
do not depend on time. Multiplying (\ref{eq:timedepder}) 
by $u_t$ and integrating over $\Gamma$ we get 
\begin{equation}
\frac{d}{dt} \| u_t \|^2_\Gamma 
+ 2 ((\alpha - \divs \beta ) u_t,u_t) _\Gamma
= 0
\end{equation}
Using (\ref{eq:assumdivbeta}) we obtain
\begin{equation}
\frac{d}{dt} \| u_t \|^2_\Gamma + 2 C \|u_t \|^2_\Gamma \leq 0
\end{equation}
which implies 
\begin{equation}
\frac{d}{dt} \Big( \| u_t \|^2_\Gamma e^{2C t} \Big) \leq 0
\end{equation}
Integrating over $[0,T]$ we get 
\begin{equation} \label{eq:exista}
\| u_t(T) \|_\Gamma \leq \| u_t(0) \|_\Gamma e^{-C T} 
= \| g \|_\Gamma e^{-C T}
\end{equation}
where we used equation (\ref{eq:timedep}) for $t=0$ 
to conclude that $u_t(0)=g$ since $u(0)=0$ and therefore 
also $\nablas u(0) = 0$. Using (\ref{eq:exista}) we have 
\begin{align}
&\| u(T_2) - u(T_1) \|_\Gamma 
= \|\int_{T_1}^{T_2} u_t(s)ds \|_\Gamma 
\leq  \int_{T_1}^{T_2} \| u_t(s) \|_\Gamma ds 
\\ \label{eq:existb}
&\qquad \leq \int_{T_1}^{T_2} \| g \|_\Gamma e^{-C s} ds
\leq e^{-CT_1}\Big(1 + e^{-C(T_2-T1)}\Big) \| g \|_\Gamma 
\leq e^{-CT_1} \| g \|_\Gamma
\end{align}
for $0\leq T_ 1\leq T_2< \infty$. Using the time dependent 
equation (\ref{eq:timedep}) we have
\begin{equation}
\beta\cdot \nabla (u(T_2) - u(T_1)) = u_t(T_1) - u_t(T_2) 
+ \alpha (u(T_1) - u(T_2))
\end{equation}
and therefore, using the fact that 
$\|\alpha\|_{L^\infty(\Gamma)} \lesssim 1$, we have the estimate
\begin{align}
\|\beta\cdot \nabla (u(T_2) - u(T_1))\|_\Gamma 
&\lesssim \|u_t(T_1)\|_\Gamma + \|u_t(T_2)\|_\Gamma
+\|(u(T_2) - u(T_1))\|_\Gamma 
\\  \label{eq:existc}
&\lesssim e^{-C T_1} \|g \|_\Gamma
\end{align}
where we used (\ref{eq:exista}) and (\ref{eq:existb}) in the 
last step. Together, (\ref{eq:existb}) and (\ref{eq:existc}) 
leads to the estimate
\begin{equation}\label{eq:existd}
\| u(T_2) - u(T_1) \|_V \lesssim e^{-C T_1} \|g \|_\Gamma, \qquad
0\leq T_1 \leq T_2
\end{equation}
Thus we conclude that for each $\epsilon > 0 $ there is 
$T_\epsilon$ such that $\| u(T_1) - u(T_2) \|_V \leq \epsilon$ 
for all $T_1,T_2> T_\epsilon$. We can then pick a sequence 
$u(T_n)$ with $T_n=n, n=1,2,3,\dots$ and conclude 
from (\ref{eq:existd}) that the sequence is Cauchy in 
$V$ and therefore it converges to a limit $u_{g} \in V$. 
We then have 
\begin{equation}
\|L u_g - g\|_\Gamma 
\leq \|L u_g - L u_n\|_\Gamma + \|L u_n - g\|_\Gamma 
\leq \|u_g - u_n\|_V + \|u_t(t_n)\|_\Gamma
\leq e^{-C T_1} \|g \|_\Gamma
\end{equation}
and thus the limit $u$ is a solution to the stationary problem in 
the sense of $L^2$ and from (\ref{eq:existd}) with $T_1=0$, we have the stability estimate 
\begin{equation}\label{eq:existe}
\| u_g \|_V \lesssim \| g \|_\Gamma
\end{equation}

\paragraph{Right Hand Side in $\boldsymbol{L^2(\Gamma)}$.} For 
$f \in L^2(\Gamma)$ we pick a sequence of smooth functions $f_n$ 
that converges to $f$ in $L^2(\Gamma)$. Then for each $f_n$ 
there is a solution $u_n \in V$ to $L u_n = f_n$ 
and we note that $L(u_n - u_m) = f_n - f_m$ and therefore it 
follows from (\ref{eq:existe}) that 
\begin{align}
\| u_n - u_m \|_V \lesssim \|f_n - f_m\|_\Gamma
\end{align}
and thus $\{u_n\}$ is a Cauchy sequence since $\{f_n\}$ 
is a Cauchy sequence. Denoting the limit of $u_n$ by $u$ we 
have
\begin{equation}
\|L u - f \|_\Gamma 
\leq \| L (u - u_n) \|_\Gamma + \|f_n- f\|_\Gamma
\leq \| u - u_n \|_V + \| f_n - f\|_\Gamma
\end{equation}
which tends to zero as $n$ tends to infinity and thus $u \in V$ 
is a solution to $L u = f$ in the sense of $L^2$.
\end{proof}

\section{The Finite Element Method}

\subsection{The Discrete Surface}
Let $\Omega_0$ be a polygonal domain that contains $U_{\delta_0}(\Gamma)$ 
and let $\{\mcT_{0,h}, h \in (0,h_0]\}$ be a family of quasiuniform partitions 
of $\Omega_0$ into shape regular tetrahedra with mesh parameter $h$. 
Let $\Gamma_h\subset \Omega_0$ be a connected surface such that 
$\Gamma_h \cap T$ is a subset of some hyperplane for each 
$T\in \mcT_{0,h}$ and let $n_h$ be the piecewise constant unit normal 
to $\Gamma_h$. 

\paragraph{Geometric Approximation Property.} The family $\{\Gamma_h: h \in (0,h_0]\}$ 
approximates $\Gamma$ in the following sense: 
\begin{itemize}
\item $\Gamma_h \subset U_{\delta_0}(\Gamma)$, $\forall h \in (0,h_0]$, and the closest point mapping $p:\Gamma_h \rightarrow \Gamma$ is a 
bijection.
\item The following estimates hold
\begin{equation}\label{assum:geom}
\|\rho\|_{L^\infty(\Gamma_h)} \lesssim h^2, \qquad 
\|n - n_h \|_{L^\infty(\Gamma_h)} \lesssim h
\end{equation}
\end{itemize}

We introduce the following notation for the geometric entities involved in the mesh
\begin{align}
\mcT_h &= \{T\in \mcT_{h,0} : \overline{T}\cap \Gammah \neq \emptyset\}
\\
\mcF_h &= \{F = (\overline{T}_1 \cap \overline{T}_2)
\setminus 
\partial (\overline{T}_1 \cap \overline{T}_2) : T_1,T_2 \in \mcT_h\}
\\
\mcK_h &=\{K = T \cap \Gammah : T \in \mcT_h\}
\cup \{F \in \mcF_h: F \subset \Gamma_h\}
\\
\mcE_h &= \{E= \partial K_1 \cap \partial K_2 : K_1,K_2 \in \mcK_h\} 
\end{align}
We also use the notation $\omega^l = \{p(x)\in \Gamma : x \in 
\omega \subset \Gamma_h \}$, in particular, $\mcK_h^l = 
\{K^l : K\in \mcK_h^l\}$ is a partition of $\Gamma$.

\subsection{The Finite Element Method}
We let $V_h$ be the space of continuous piecewise linear 
functions defined on $\mcT_h$. The finite element method 
takes the form: find $u_h \in V_h$ such that 
\begin{equation}\label{eq:fem}
A_h(u_h,v) = l_h(v) \quad \forall v \in V_h
\end{equation}
Here the forms are defined by
\begin{equation}
A_h(v,w) = a_h(v,w) + j_h(v,w), \qquad l_h(v) = (f,v)_\Gammah
\end{equation}
and
\begin{align}
a_h(v,w) &= (\beta_h \cdot\nablash v,w)_\Gammah 
+ (\alpha_h v,w)_\Gammah 
\\
j_h(v,w) &= c_F h ([n_F \cdot \nablash v],[n_F \cdot \nablash w])_{\mcF_h}
\end{align}
where $\nablash v = \Psh \nabla v = (I - n_h\otimes n_h) \nabla v$ is the elementwise defined tangent 
gradient on $\Gammah$, $c_F$ is a positive stabilization parameter, $\alpha_h$ and $\beta_h$ are discrete approximations of $\alpha$ and $\beta$ which we specify in Section \ref{Sec:QuadErrors} below. 

\section{Preliminary Results}
\subsection{Norms}
We let $\| v \|_\omega$ denote the $L^2$ norm over the set 
$\omega$ equipped with the appropriate Lebesgue measure. 
Furthermore, we introduce the scalar products
\begin{align}
(v,w)_{\mcT_h} &= \sum_{T\in\mcT_h} (v,w)_T
,\quad
(v,w)_{\mcK_h} = \sum_{K\in\mcK_h} (v,w)_K
\\
(v,w)_{\mcF_h} &= \sum_{F\in\mcF_h} (v,w)_F
,\quad
(v,w)_{\mcE_h} = \sum_{E\in\mcE_h} (v,w)_E
\end{align}
with corresponding $L^2$ norms denoted by $\|\cdot\|_{\mcT_h}, \|\cdot\|_{\mcK_h}, \|\cdot\|_{\mcF_h},$ and $\|\cdot\|_{\mcE_h}$. 
Note that $\|\cdot \|_{\mcK_h} = \|\cdot \|_{\Gammah}$ and 
that the following scaling relations hold
\begin{equation}
\sum_{T\in \mcT_h} |T| \sim h,
\quad 
\sum_{K \in \mcK_h} |K|\sim \sum_{F \in \mcF_h} |F| \sim 1,
\quad
\sum_{E \in \mcE_h} |E| \sim h^{-1}
\end{equation}
Finally, we introduce the energy type norms
\begin{align}\label{eq:normtrippleh}
\tn v \tn_h^2 &= \tn v \tn^2_{\mcK_h} + h \tn v \tn^2_{\mcF_h}
\\ \label{eq:normtrippleKh}
\tn v \tn^2_{\mcK_h} &= h\|\beta_h \cdot \nablash v \|_{\mcK_h}^2  
+ \| v \|^2_{\mcK_h} 
\\ \label{eq:normtrippleFh}
\tn v \tn^2_{\mcF_h} &= \| [ n_F \cdot \nabla v ]\|^2_{\mcF_h}
\end{align}

\subsection{Inverse Estimates} 
Let $T \in \mcT_h$, $K=\Gamma_h \cap T$, 
$E \in \mcE_h$ and $E\subset \partial K$, then 
the following inverse estimates hold
\begin{align}\label{eq:inverseEtoF}
h \| v \|^2_{E} & \lesssim \|v \|^2_{F} \quad \forall v \in V(F)
\\ \label{eq:inverseFtoT}
h \|v \|^2_{F} &\lesssim \| v \|^2_{T}\quad \forall v \in W(T)
\\ \label{eq:inverseKtoT}
h \|v \|^2_{K} &\lesssim \| v \|^2_{T}\quad \forall v \in W(T)
\end{align}
with constants independent of the position of the 
intersection of $\Gammah$ and $T$. Note that the second 
inequality is the standard element to face inverse inequality.
Here $V(F) = \widehat{V}\circ X_F^{-1}$ ($W(T) = \widehat{W}\circ X_T^{-1}$), where $\widehat{V}$ ($\widehat{W}$) is a finite dimensional space on the reference triangle $\widehat{F}$ (reference tetrahedron $\widehat{T}$) and $X_F:\widehat{F}\rightarrow F$ ($X_T:\widehat{T}\rightarrow T$) an affine bijection.

\subsection{Extension and Lifting of Functions}
In this section we summarize basic results concerning extension and liftings of functions. We refer to \cite{BuHaLa14} and \cite{De09} for 
further details.

\paragraph{Extension.}
Recalling the definition of the extension and using the chain rule 
we obtain the identity 
\begin{equation}\label{eq:tanderext}
\nablash v^e = B^T \nablas v  
\end{equation}
where 
\begin{equation}\label{Bmap}
B = P_\Gamma (I - \rho \mcH)  P_\Gammah : T_{x}(K)\rightarrow
T_{p(x)} (\Gamma)
\end{equation}
and $\mcH = \nabla \otimes \nabla \rho$. Here $\mcH$ is a 
$\Gamma$-tangential tensor, which equals the 
curvature tensor on $\Gamma$, and there is $\delta>0$ such 
that $\|\mcH \|_{L^\infty(U_\delta(\Gamma))}\lesssim 1$. 
Furthermore, we have the inverse mapping $B^{-1}= P_\Gammah(I-\rho\mcH)^{-1} P_\Gamma : T_{p(x)}(\Gamma)\rightarrow T_{x} (K)$. 

\paragraph{Lifting.}
The lifting $w^l$ of a function $w$ defined on $\Gamma_h$ to $\Gamma$ 
is defined as the push forward
\begin{equation}
(w^l)^e = w^l \circ p = w \quad \text{on $\Gamma_h$}
\end{equation}
and we have the identity 
\begin{equation}\label{eq:tanderlift}
\nablas w^l = B^{-T} \nablash w
\end{equation}

\paragraph{Estimates Related to $\boldsymbol{B}$.}
Using the uniform bound  
$\|\mcH\|_{L^\infty(U_\delta(\Gamma))}\lesssim 1$ 
it follows that
\begin{equation}\label{BBTbound}
  \| B \|_{L^\infty(\Gamma_h)} \lesssim 1,
 \quad \| B^{-1} \|_{L^\infty(\Gamma)} \lesssim 1,
 \quad \| (P_\Gamma - B)P_\Gammah \|_{L^\infty(\Gamma_h)} 
 \lesssim h^2
\end{equation}
Next consider the surface measure $d \Gamma = |B| d \Gammah$,
where $|B|$ is the absolute value of the determinant of $[B \xi_1 \, B \xi_2\, n^e]$ and $\{\xi_1,\xi_2\}$ is an orthonormal basis in 
$T_x(K)$. We have the following estimates
\begin{equation}\label{detBbound}
\| 1 - |B| \|_{L^\infty(\Gamma_h)} \lesssim h^2, \quad \||B|\|_{L^\infty(\Gamma_h)} \lesssim 1, \quad \||B|^{-1}\|_{L^\infty(\Gamma_h)} \lesssim 1
\end{equation}
In view of these bounds and the identities (\ref{eq:tanderext}) and 
(\ref{eq:tanderlift}) we obtain the following equivalences
\begin{equation}\label{eq:normequ}
\| v^l \|_{L^2(\Gamma)}
\sim \| v \|_{L^2(\Gammah)}, \qquad \| v \|_{L^2(\Gamma)} \sim
\| v^e \|_{L^2(\Gammah)}
\end{equation}
and
\begin{equation}\label{eq:normequgrad}
\| \nabla_\Gamma v^l \|_{L^2(\Gamma)} \sim \| \nablash v \|_{L^2(\Gammah)},
\qquad \| \nablas v \|_{L^2(\Gamma)} \sim
\| \nablash v^e \|_{L^2(\Gammah)}
\end{equation}

\subsection{Interpolation}
Let $\pi_h:L^2(\mcT_h) \rightarrow V_h$ be the Cl\'ement interpolant. Then
we have the following standard estimate
\begin{equation}\label{eq:interpoltets}
\| v - \pi_h v \|_{H^m(T)} \lesssim h^{s-m} \| v \|_{H^s(\mcN(T))},
\quad m\leq s \leq 2, \quad m=0,1
\end{equation}
where $\mcN(T)\subset \mcT_h$ is the set of neighboring elements of $T$. In particular, we have the $L^2$ stability estimate
\begin{equation}
\| \pi_h v \|_{T} \lesssim \| v \|_{\mcN(T)}\quad \forall T\in \mcT_h
\end{equation}
and as a consequence $\pi_h:L^2(\mcT_h) \rightarrow V_h$ 
is uniformly bounded and we have the estimate 
\begin{equation}\label{eq:pihl2stab}
\| \pi_h v \|_{\mcT_h} \lesssim \| v \|_{\mcT_h}
\end{equation}
Using the trace inequality
\begin{equation}
\| v \|^2_{T\cap \Gammah} \lesssim h^{-1} \| v \|_T^2 + h \| \nabla v \|^2_T
\end{equation}
where the constant is independent of the position of the intersection between $\Gammah$ and $T$, see \cite{HaHaLa04} 
for a proof, and stability of the extension operator 
\begin{equation}\label{eq:extstab}
\|v^e\|_{H^s(U_{\delta_0}(\Gamma))}
\lesssim \delta^{1/2}\|v\|_{H^s(\Gamma)}
\end{equation}
we obtain the interpolation error estimate
\begin{equation}\label{eq:interpolsurf}
\| v - \pi_h v \|_{H^m(\mcK_h)} \lesssim h^{s-m} \| v \|_{H^s(\Gamma)} \quad m\leq s \leq 2, \quad m=0,1
\end{equation}
Using (\ref{eq:interpolsurf}) and the definition of the energy 
norm (\ref{eq:normtrippleKh}) we obtain
\begin{equation}\label{eq:interpolenergysurf}
\tn v^e - \pi_h v^e \tn_{\mcK_h} \lesssim h^{3/2}\|v\|_{H^2(\Gamma)}
\end{equation}
and using a standard trace inequality on tetrahedra, the interpolation estimate (\ref{eq:interpoltets}), and the stability (\ref{eq:extstab}) of the extension, we have
\begin{equation}
 \label{eq:interpolface}
\tn v^e - \pi_h v^e \tn_{\mcF_h} \lesssim h \|v\|_{H^2(\Gamma)}
\end{equation}
Combining these two estimates we get
\begin{equation}\label{eq:interpolenergy}
\tn v^e - \pi_h v^e \tn_{h} \lesssim h^{3/2}\|v\|_{H^2(\Gamma)}
\end{equation}

\section{Stability Estimates}

\subsection{Some Technical Lemmas}
In this section we derive some technical lemmas that provide bounds for 
certain terms that arise in the stability estimates. The bounds depend 
in a critical way on the additional control provided by the gradient 
jump stabilization term. We begin by recalling a construction of coverings 
developed in \cite{BuHaLa14}, Section 4.1.

\paragraph{Families of Coverings of $\mcT_h$.} Let $x$ be a point on 
$\Gamma$ and let $B_\delta(x) = \{y\in \IR^3 : |y-x|<\delta\}$ and 
$D_\delta(x) = B_\delta(x)\cap \Gamma$. We define the sets of elements
\begin{equation}
\mcK_{\delta,x} = \{K \in \mcK_h : \overline{K}^l \cap D_\delta(x)\neq \emptyset \}, \qquad 
\mcT_{\delta,x} = \{T\in \mcT_h : T\cap \Gammah \in \mcK_{\delta,x}\}
\end{equation}
With $\delta \sim h$ we use the notation $\mcK_{h,x}$ and $\mcT_{h,x}$.
For each $\mcT_h$, $h \in (0,h_0]$ there is a set of points 
$\mathcal{X}_h$ on $\Gamma$ such that  
$\{\mcK_{h,x}, x \in \mathcal{X}_h\}$ and 
$\{\mcT_{h,x}, x \in \mathcal{X}_h\}$ are coverings of $\mcT_h$ and 
$\mcK_h$ with the following properties:
\begin{itemize}
\item The number of sets containing a given point $y$ is uniformly 
bounded
\begin{equation}
\#\{x\in \mathcal{X}_h:y\in \mcT_{h,x}\}\lesssim 1\quad \forall y\in \IR^3
\end{equation}
for all $h\in (0,h_0]$ with $h_0$ small enough.
\item The number of elements in the sets $\mcT_{h,x}$ 
is uniformly bounded
\begin{equation}
\# \mcT_{h,x} \lesssim 1\quad  \forall x \in \mathcal{X}_h
\end{equation}
for all $h\in (0,h_0]$ with $h_0$ small enough, and each element in $\mcT_{h,x}$ share at least one face with another 
element in $\mcT_{h,x}$.
\item $\forall h \in (0,h_0]$ and $\forall x \in \mathcal{X}_h$, 
 $\exists T_x \in \mcT_{h,x}$ that has a large 
intersection with $\Gammah$ in the sense that 
\begin{equation}
|T_x \cap \Gammah | = |K_x| \sim h^2\quad  
\forall x \in \mathcal{X}_h, 
\end{equation}
for all $h\in (0,h_0]$ with $h_0$ small enough.
\end{itemize}
Furthermore, there is a constant vector $\beta_{h,x}$ such that 
\begin{equation}\label{eq:betahx}
\| \beta_h - \beta_{h,x}\|_{L^\infty(\mcK_{h,x})} \lesssim h
\end{equation}
We first recall a Lemma from \cite{BuHaLa14} and then we prove 
two lemmas tailored to the particular demands of this paper.

\begin{lem}\label{lem:TechnicalA} It holds
\begin{equation}
\|v\|^2_{\mcT_h}
\lesssim
h\Big( \|v\|^2_{\mcK_h} +  \tn v \tn^2_{\mcF_h} \Big)
\qquad
\forall v \in V_h
\end{equation}
for all $h\in (0,h_0]$ with $h_0$ small enough.
\end{lem}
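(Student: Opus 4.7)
The plan is to exploit the covering construction recalled just above from \cite{BuHaLa14}, which localises the argument to patches of uniformly bounded cardinality and reduces the estimate to a combination of a ``good element'' bound on a distinguished tetrahedron $T_x$ and a ``hopping'' argument across element faces. The two key features are that (i) each patch $\mcT_{h,x}$ contains a tetrahedron $T_x$ whose intersection $K_x$ with $\Gammah$ has full size $|K_x|\sim h^2$, and (ii) any other element of the patch can be reached from $T_x$ in $O(1)$ face-to-face hops, each one controlled by the normal gradient jump $[n_F\cdot\nabla v]$.

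First I would establish the local ``good element'' bound
\begin{equation*}
\|v\|_{T_x}^2 \lesssim h\,\|v\|_{K_x}^2 \qquad \forall v\in V_h.
\end{equation*}
Pulling back by the affine map $X_{T_x}$, the set $\widehat K_x = X_{T_x}^{-1}(K_x)$ is a planar cut of area $\sim 1$ uniformly in $h$, because $|K_x|\sim h^2$, so on the finite dimensional space of affine functions on $\widehat T$ the norms $\|\cdot\|_{\widehat T}$ and $\|\cdot\|_{\widehat K_x}$ are equivalent with constants depending only on the lower bound on $|\widehat K_x|$. Scaling back, using $|T_x|\sim h^3$ and $|K_x|\sim h^2$, then yields the claimed inequality.

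Next I would carry out the hopping step. For two face-adjacent tetrahedra $T_1,T_2\in\mcT_{h,x}$ sharing face $F$, let $\tilde v_1$ denote the affine extension of $v|_{T_1}$ to $T_2$. Continuity of $v$ together with its piecewise affine structure imply that $v|_{T_2}-\tilde v_1$ vanishes on $F$ and has the form $j\,d(x)$, where $j=[n_F\cdot\nabla v]$ is constant on $F$ and $d$ is the signed distance to the plane of $F$, bounded by $Ch$ on $T_2$. A direct volume integration together with $\|\tilde v_1\|_{T_2}\sim\|v\|_{T_1}$ (by shape regularity of $T_1\cup T_2$) then gives
\begin{equation*}
\|v\|_{T_2}^2 \lesssim \|v\|_{T_1}^2 + h^3\,\|[n_F\cdot\nabla v]\|_F^2.
\end{equation*}
Iterating over the $O(1)$ hops needed to reach any $T\in\mcT_{h,x}$ from $T_x$, summing over the bounded-cardinality patch, applying the good element bound to the $T_x$-term, and finally summing over $x\in\mathcal{X}_h$ using the finite overlap of the covering, one obtains
\begin{equation*}
\|v\|_{\mcT_h}^2 \lesssim h\,\|v\|_{\mcK_h}^2 + h^3\,\|[n_F\cdot\nabla v]\|_{\mcF_h}^2.
\end{equation*}
Since $h\le h_0$ may be assumed less than $1$, we have $h^3\le h$, and recognising the second term as $h\,\tn v\tn_{\mcF_h}^2$ completes the argument.

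The main obstacle is the hopping estimate: one must carefully track all powers of $h$ when propagating information across a face (the jump is constant on a face of area $\sim h^2$, the distance function $d$ is $O(h)$ on the neighbour, and the neighbour has volume $O(h^3)$), so that the resulting prefactor on $\|[n_F\cdot\nabla v]\|_F^2$ is compatible with the target $h$-weighting of $\tn\cdot\tn_h$. The good element step is intrinsically finite-dimensional and supplies the decisive $h^{1/2}$ factor that would otherwise fail on arbitrary small cuts.
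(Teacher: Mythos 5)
Your proof is correct: the paper itself offers no argument here beyond citing Lemma~4.5 of \cite{BuHaLa14}, and your reconstruction --- the scaled norm equivalence on the fat-intersection element giving $\|v\|_{T_x}^2\lesssim h\|v\|_{K_x}^2$ (the powers $h^3\cdot h^{-2}=h$ check out), the face-hopping estimate $\|v\|_{T_2}^2\lesssim\|v\|_{T_1}^2+h^3\|[n_F\cdot\nabla v]\|_F^2$ (again $h^2\cdot h^3$ from $\|d\|_{L^\infty}^2|T_2|$ against $|F|\sim h^2$ gives exactly $h^3$), and the finite-overlap summation --- is precisely the mechanism behind the cited lemma and is the same device the paper itself uses in the verification of (\ref{eq:L2jumpest}) inside the proof of Lemma~\ref{lem:TechnicalAA}. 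The one point worth stating explicitly is that the hopping step needs every $T\in\mcT_{h,x}$ to be reachable from $T_x$ by a chain of face-adjacent elements of the patch, i.e.\ face-connectivity of $\mcT_{h,x}$, which is formally stronger than the listed property that each element merely shares a face with \emph{some} other element of the patch; this connectivity is supplied by the covering construction in \cite{BuHaLa14} and is implicitly assumed in the paper's own iteration arguments, so it is a presentational gap rather than a mathematical one.
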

\begin{proof}
See Lemma 4.5 in \cite{BuHaLa14}.
\end{proof}

\begin{lem}\label{lem:TechnicalAA} It holds
\begin{equation}
h\| v \|^2_{\mcE_h}
\lesssim
\|v\|^2_{\mcK_h} +  h^2 \tn v \tn^2_{\mcF_h}
\qquad
\forall v \in V_h
\end{equation}
for all $h\in (0,h_0]$ with $h_0$ small enough.
\end{lem}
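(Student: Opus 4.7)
My strategy is to localize to the coverings $\{\mcT_{h,x} : x \in \mathcal{X}_h\}$ introduced before Lemma \ref{lem:TechnicalA} and refine the argument so as to extract an additional factor of $h$ on the jump term beyond what Lemma \ref{lem:TechnicalA} provides. The key is that on each patch the deviation of $v$ from its restriction to the good element $T_x$ is a piecewise-linear polynomial entirely governed by the normal gradient jumps across interior faces, which one can write down explicitly and estimate sharply.

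First I would dispose of the passage from edges to tetrahedra by applying the inverse estimates (\ref{eq:inverseEtoF}) and (\ref{eq:inverseFtoT}) successively: if $E \in \mcE_h$ is contained in a face $F \in \mcF_h$ of a tetrahedron $T \in \mcT_h$, then $h^2 \|v\|^2_E \lesssim h\|v\|^2_F \lesssim \|v\|^2_T$. Thus the proof reduces to bounding $\|v\|^2_T$ on each $T \in \mcT_{h,x}$ in terms of the local surface norm and the local jump norm with the right powers of $h$. By the covering properties, any $T \in \mcT_{h,x}$ is connected to the good element $T_x$ by a chain $T_x = T_0, T_1, \dots, T_k = T$ of face-neighbors of uniformly bounded length $k\lesssim 1$, each adjacent pair sharing a face $F_i \in \mcF_h$. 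The polynomial $w_i = v|_{T_i} - v|_{T_{i-1}}$ (the latter extended polynomially) vanishes on $F_i$ and has constant normal derivative equal to $[n_{F_i}\cdot \nabla v]$, so
\begin{equation}
w_i(y) = [n_{F_i}\cdot \nabla v]\, d(y,F_i), \qquad y \in T_i,
\end{equation}
with $d(y,F_i)\lesssim h$ pointwise on $T_i$. Using $|T_i|\sim h^3$, $|F_i|\sim h^2$ and the fact that the jump is constant on $F_i$, this gives the sharp scaling $\|w_i\|^2_{T_i}\lesssim h^3\|[n_{F_i}\cdot \nabla v]\|^2_{F_i}$. Iterating the triangle inequality along the bounded-length chain, together with the finite-dimensional equivalence of polynomial norms on neighboring shape-regular tetrahedra, yields $\|v\|^2_T \lesssim \|v\|^2_{T_x} + h^3\, \tn v\tn^2_{\mcF(\mcT_{h,x})}$.

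Since $|K_x|\sim h^2$ by the covering property, a standard scaling (finite-dimensional equivalence) argument gives $\|v\|^2_{T_x} \lesssim h\|v\|^2_{K_x}\leq h \|v\|^2_{\mcK_{h,x}}$. Combining the three inequalities yields
\begin{equation}
h^2 \|v\|^2_E \lesssim h \|v\|^2_{\mcK_{h,x}} + h^3\, \tn v \tn^2_{\mcF(\mcT_{h,x})}
\end{equation}
for every edge $E$ contained in the patch $\mcT_{h,x}$. Dividing by $h$, summing over the uniformly bounded set $\mcE(\mcT_{h,x})$ and then over the patches $x \in \mathcal{X}_h$ using the finite-overlap property of the covering, produces the claimed estimate. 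The main obstacle is the sharp scaling step in the chain argument: a naive use of trace and inverse inequalities between $T$ and $F$ would only yield $\|w_i\|^2_{T_i}\lesssim h\|[n_{F_i}\cdot \nabla v]\|^2_{F_i}$ and hence recover only Lemma \ref{lem:TechnicalA}; the factor $h^3$ crucially relies on exploiting that $w_i$ vanishes on $F_i$ and is linear, so that the explicit formula $w_i = [n_{F_i}\cdot \nabla v]\, d(\cdot,F_i)$ picks up an extra factor of $h^2$ from $d(y,F_i)^2 \lesssim h^2$ inside the integral over $T_i$.
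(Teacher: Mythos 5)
Your overall strategy (localize to the coverings, reduce to the good element $T_x$ by a bounded chain of face-neighbours, and use the explicit linear representation of the inter-element difference to extract the sharp $h^3$ scaling on the jump term) matches the paper's proof of its inequality (\ref{eq:L2jumpest}), and that part of your argument is correct. However, there is a genuine gap at the step where you claim that a ``standard scaling (finite-dimensional equivalence) argument'' gives $\|v\|^2_{T_x}\lesssim h\|v\|^2_{K_x}$. This inequality is false for general $v\in P_1(T_x)$: take $v=t$, the signed distance to the hyperplane containing $K_x$. Then $\|v\|^2_{K_x}=0$ while $\|v\|^2_{T_x}\sim h^5\neq 0$ (and, since $t$ is globally linear, all face jumps vanish too, so no jump term can rescue the bound). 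The finite-dimensional equivalence argument requires the right-hand side to be a norm on $P_1(T_x)$, and $\|\cdot\|_{K_x}$ is not, because $K_x$ is contained in a plane. The same counterexample shows that your very first reduction, $h\|v\|^2_{E}\lesssim h^{-1}\|v\|^2_{T}$, is already too lossy: for $v=t$ one has $h^{-1}\|v\|^2_{\mcT_{h,x}}\sim h^4$, whereas the claimed bound $\|v\|^2_{\mcK_{h,x}}+h^2\tn v\tn^2_{\mcF_{h,x}}\sim h^6$, so no subsequent manipulation of tetrahedron norms can close the argument.

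The missing idea is precisely the one the paper uses to handle this normal-direction component. The paper splits the extended polynomial $v_x$ as $v_x=v_{x,c}+t\, n_x\cdot\nabla v_x$, where $v_{x,c}$ is constant in the direction normal to $K_x$. For $v_{x,c}$ the passage $h^{-1}\|v_{x,c}\|^2_{T_x}\lesssim \|v_{x,c}\|^2_{K_x}$ \emph{is} legitimate (the $L^2(K_x)$ norm is a norm on polynomials constant in the normal direction, since $|K_x|\sim h^2$). The remainder $t\,n_x\cdot\nabla v_x$ is \emph{not} sent to the tetrahedron norm; it is estimated directly on the edges $\mcE_{h,x}$, using the geometric fact that $\|t\|_{L^\infty(\mcE_{h,x})}\lesssim h^2$ -- a consequence of $\|n-n_h\|_{L^\infty(\Gammah)}\lesssim h$, since the edges lie on $\Gammah$ and the facet normals within a patch differ by $O(h)$. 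This $O(h^2)$ flatness of $\Gammah$ over the patch is the real content of the lemma and is exactly what distinguishes it from Lemma \ref{lem:TechnicalA}; your proposal never invokes the geometric approximation property (\ref{assum:geom}), which is a strong sign that something essential is missing. To repair your argument you would need to perform the same splitting before leaving the edge set, keep the normal component on $\mcE_{h,x}$, and only then apply your chain argument to the tangentially-determined part.
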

\begin{proof} Consider an arbitrary set in the covering 
described above. Then we shall prove that we have the 
estimate
\begin{equation}
h\| v \|^2_{\mcE_{h,x}} \lesssim \| v \|^2_{\mcK_{h,x}} 
+ h^2 \| [n_F \cdot \nabla v] \|^2_{\mcF_{h,x}}
\end{equation}
Let $v_x : \mcT_{h,x} \rightarrow \IR$ 
be the first order polynomial that satisfies $v_x = v|_{T_x}$, where $T_x$ is the element with a large intersection 
$K_x$. Adding and subtracting $v_x$ we get
\begin{equation}
h\| v \|^2_{\mcE_{h,x}} \leq h \| v - v_x \|^2_{\mcE_{h,x}} 
+ h \| v_x \|^2_{\mcE_{h,x}} = I + II
\end{equation}
\paragraph{Term $\bfI$.} We have 
\begin{equation}
h \| v - v_x \|^2_{\mcE_{h,x}} 
\lesssim 
h^{-1}  \| v - v_x \|^2_{\mcT_{h,x}}
\lesssim 
h^2 \tn v \tn^2_{\mcF_{h,x}}
\end{equation}
where we used the inverse estimates (\ref{eq:inverseEtoF}) 
and (\ref{eq:inverseFtoT}) to pass from $\mcE_h$ to 
$\mcT_h$, the inequality 
\begin{equation}\label{eq:L2jumpest}
\| w \|^2_{\mcT_{h,x}} 
\lesssim 
\| w \|^2_{T_x} + h^3 \tn w \tn^2_{\mcF_{h,x}}
\quad \forall w \in P_1(\mcT_{h,x})
\end{equation}
with $w=v-v_x=0$ on $T_{x}$, and finally the fact that
$[n_E \cdot \nabla v_x] = 0$. 

\paragraph{Verification of (\ref{eq:L2jumpest}).} Considering 
a pair of elements $T_1,T_2\in \mcT_{h,x}$ that share a 
face $F$ we have the identity 
\begin{equation}
w_2(x) = w_1(x) + [n_F\cdot \nabla w](x - x_F)\cdot n_F
\end{equation}
for any continuous piecewise linear polynomial $w$ with 
$w_i = w|_{K_i}$, $i=1,2.$ Integrating over $K_2$ gives 
\begin{equation}
\|w_2\|_{K_2}^2 \lesssim 
\|w_1\|_{K_2}^2 + \|[n_F\cdot \nabla w](x - x_F)\cdot n_F\|_{K_2}^2
\lesssim \|w_1\|_{K_1}^2 + h^3\|[n_F\cdot \nabla w]\|_{F}^2
\end{equation} 
Iterating this inequality and using the fact that the number of 
elements in $\mcT_{h,x}$ is uniformly bounded (\ref{eq:L2jumpest}) 
follows.

\paragraph{Term $\bfI\bfI$.}  We first split $v_x$ into 
one term $v_{x,c}$ which is constant in the 
direction normal to $K_x$ and a reminder term 
$v - v_x = t n_x \cdot \nabla v_x$ 
where $n_x = n_h|_{K_x}$ and $t$ is the signed distance 
to the hyperplane in which $K_x$ is contained, as follows
\begin{equation}
v_x = v_{x,c} + t n_x \cdot \nabla v_x
\end{equation} 
Using the triangle inequality we obtain 
\begin{align}
h \| v_x \|^2_{\mcE_{h,x}} 
&\lesssim
h \| v_{x,c} \|^2_{\mcE_{h,x}}
+ 
h \| t n_x \cdot \nabla v \|^2_{\mcE_{h,x}}
= II_1 + II_2
\end{align}

\noindent{\emph{Term $II_1$.}} We have
\begin{equation}
II_1  
= h \| v_{x,c} \|^2_{\mcE_{h,x}}
\lesssim  h^{-1} \| v_{x,c} \|^2_{\mcT_{h,x}}
\lesssim h^{-1} \| v_{x,c} \|^2_{T_{x}}
\lesssim \| v_{x,c} \|^2_{K_{x}}
\lesssim \| v \|^2_{\mcK_{h,x}}
\end{equation}
where we used the inverse estimates (\ref{eq:inverseEtoF}) 
and (\ref{eq:inverseFtoT}) to pass from $\mcE_h$ to 
$\mcT_h$, an inverse estimate using the fact that $v_{x,c}$ 
is a polynomial on $\mcT_{h,x}$, finally an inverse inequality 
which holds since $v_{x,c}$ is constant in the normal direction.

\noindent{\emph{Term $II_2$.}} We have
\begin{align}
II_2 
&\lesssim h \|t\|^2_{L^\infty(\mcE_{h,x})} 
\| n_E \cdot \nabla v_x \|^2_{\mcE_{h,x}}
\lesssim h^5 \| n_E \cdot \nabla v_x \|^2_{\mcE_{h,x}}
\\ \nonumber
&\qquad \lesssim h^3 \| \nabla v_x \|^2_{T_{x}}
\lesssim h \| v_x \|^2_{T_{x}}
\lesssim h \| v \|^2_{\mcT_{h,x}}
\end{align}
where we used H\"older's inequality, the bound 
\begin{equation}\label{eq:tbound}
\| t \|_{L^\infty(\mcE_{h,x})} \lesssim h^2
\end{equation}
the inverse estimates (\ref{eq:inverseEtoF}) 
and (\ref{eq:inverseFtoT}) to pass from $\mcE_h$ to 
$\mcT_h$, an inverse inequality to pass from $H^1$ 
to $L^2$.

\paragraph{Verification of (\ref{eq:tbound}).} We note 
that each point $y \in \mcK_{h,x}$ can be connected to 
a point $z\in K_x$ using a piecewise linear curve in 
$\mcK_{h,x}$ consisting of a finite number of segments, 
each residing in a facet $K_i \in \mcK_{h,x}$, of the form 
\begin{equation}
y = z + \sum_{i=1}^N s_i a_i  
\end{equation}
where $0\leq s_i \lesssim h$ is the arclength parameter of each 
segment, $a_i \in T(K_i)$ is a unit direction vector in the tangent space $T(K_i)$ to $K_i$, and $N$ is uniformly bounded. 
Then $t(y)= n_x \cdot (y - z )$ and we have the estimate 
\begin{align}
|t(y)| 
\leq \sum_{i=1}^N |s_i|\, |n_x \cdot a_i|
\leq \sum_{i=1}^N |s_i|\, |(n_x-n_{h,i})\cdot a_i|
\leq \sum_{i=1}^N |s_i|\, |n_x-n_{h,i}|
\lesssim h^2
\end{align}
where $n_{h,i}$ is the normal to the facet in which the 
$i$:th segment reside and we used the estimate $|n_x-n_{h,i}|\lesssim h$, which follows from the fact that at each edge $E$ 
shared by two facets $K_1,K_2\in \mcK_{h,x}$ with normals $n_{h,1}$ and $n_{h,2}$ we have the estimate
\begin{equation}
|n_{h,1} - n_{h,2}| \leq |n_{h,1} - n| + |n - n_{h,2}|\lesssim h
\end{equation}
and thus we obtain the bound since the number of elements in 
$\mcK_{h,x}$ is uniformly bounded.


\paragraph{Conclusion.} Collecting the estimates 
we obtain
\begin{equation}
h\| v \|^2_{\mcE_{h,x}} 
\lesssim I + II_1 + II_2 
\lesssim \| v \|^2_{\mcK_{h,x}}
+ 
h^2 \tn v \tn^2_{\mcF_{h,x}}
+
h \| v \|^2_{\mcT_{h,x}}
\end{equation}
Summing over the covering and using Lemma \ref{lem:TechnicalA} gives
\begin{align}
h\| v \|^2_{\mcE_{h}} 
&\lesssim 
\| v \|^2_{\mcK_{h}}
+ 
h^2 \tn v \tn^2_{\mcF_{h}}
+
h \| v \|^2_{\mcT_{h}}
\\
&\lesssim 
\| v \|^2_{\mcK_{h}}
+ 
h^2 \tn v \tn^2_{\mcF_{h}}
+ 
h^2\Big( \| v \|_{\mcK_h}^2 + \tn v \tn^2_{\mcF_h} \Big)
\\  
&\lesssim 
\| v \|^2_{\mcK_{h}}
+ 
h^2 \tn v \tn^2_{\mcF_{h}}
\end{align}
which concludes the proof.
\end{proof}

\begin{lem}\label{lem:TechnicalB} It holds
\begin{equation}
\|(I-\pi_h)(\beta_h \cdot \nablash v)\|^2_{\mcK_h}
\lesssim
\|v\|^2_{\mcK_h} +  \tn v \tn^2_{\mcF_h}
\qquad
\forall v \in V_h
\end{equation}
for all $h\in (0,h_0]$ with $h_0$ small enough.
\end{lem}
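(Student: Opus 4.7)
The plan is to adapt the covering and constant-subtraction strategy used in Lemma \ref{lem:TechnicalAA} to the new quantity $w := \beta_h \cdot \nablash v$. I regard $w$ as a piecewise constant function on the background mesh $\mcT_h$ by extending $\beta_h$ and $n_h$ elementwise constantly within each tetrahedron, using their values on $T \cap \Gammah$. Working with the covering $\{\mcT_{h,x}\}_{x\in\mathcal{X}_h}$, it suffices by the finite overlap property to prove the local bound
\[
\|(I-\pi_h) w\|^2_{\mcK_{h,x}} \lesssim \|v\|^2_{\mcK_{h,x}^+} + \tn v\tn^2_{\mcF_{h,x}^+}
\]
where $\mcT_{h,x}^+:=\bigcup_{T\in \mcT_{h,x}}\mcN(T)$ is the uniformly bounded enlargement of the patch needed for Cl\'ement stability, and then sum over $x\in\mathcal{X}_h$.

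The key step is constant subtraction. I would choose the global constant
\[
c_x := \beta_{h,x}\cdot P_{\Gammah,x}\,\nabla v|_{T_x},
\]
with $\beta_{h,x}$ from (\ref{eq:betahx}), $P_{\Gammah,x} := I - n_x\otimes n_x$ where $n_x = n_h|_{K_x}$, and $T_x\in\mcT_{h,x}$ the distinguished element with large intersection $K_x$. Since $\pi_h$ reproduces constants, $(I-\pi_h)w = (I-\pi_h)(w-c_x)$ identically. Applying the inverse estimate (\ref{eq:inverseKtoT}) to the piecewise polynomial $(I-\pi_h)(w-c_x)$ on each $T\in\mcT_{h,x}$, followed by the $L^2$ stability (\ref{eq:pihl2stab}) of $\pi_h$, yields
\[
\|(I-\pi_h)(w - c_x)\|^2_{\mcK_{h,x}} \lesssim h^{-1}\|w - c_x\|^2_{\mcT_{h,x}^+}.
\]

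To estimate $w-c_x$ on the patch I split according to the three sources of variation,
\[
w - c_x = (\beta_h - \beta_{h,x})\cdot \nablash v + \beta_{h,x}\cdot(P_{\Gammah}-P_{\Gammah,x})\nabla v + \beta_{h,x}\cdot P_{\Gammah,x}(\nabla v - \nabla v|_{T_x}).
\]
The first term is $O(h)|\nabla v|$ by (\ref{eq:betahx}); the second is likewise $O(h)|\nabla v|$ because $|n_h - n_x|\lesssim h$ throughout the patch, exactly as in the verification of (\ref{eq:tbound}); and the third is handled by propagating along face paths from $T_x$, using that for $v\in V_h$ only the normal component of $\nabla v$ jumps across a face. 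This is the same mechanism as the identity used in (\ref{eq:L2jumpest}), applied at the gradient level, and gives $\|\nabla v - \nabla v|_{T_x}\|^2_{\mcT_{h,x}^+} \lesssim h\tn v\tn^2_{\mcF_{h,x}^+}$. Collecting,
\[
\|w - c_x\|^2_{\mcT_{h,x}^+} \lesssim h^2\|\nabla v\|^2_{\mcT_{h,x}^+} + h\tn v\tn^2_{\mcF_{h,x}^+}.
\]

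To finish, I would absorb the gradient term via the standard elementwise inverse inequality $\|\nabla v\|^2_{\mcT_{h,x}^+} \lesssim h^{-2}\|v\|^2_{\mcT_{h,x}^+}$ together with the patch-local version of Lemma \ref{lem:TechnicalA}, obtaining $h^2\|\nabla v\|^2_{\mcT_{h,x}^+} \lesssim h(\|v\|^2_{\mcK_{h,x}^+} + \tn v\tn^2_{\mcF_{h,x}^+})$. Multiplying by $h^{-1}$ produces the local bound, and summing over $x$ completes the argument. The main obstacle I anticipate is the bookkeeping around the enlarged patch $\mcT_{h,x}^+$: it must retain uniformly bounded cardinality, stay connected in the face-neighbour sense, and still contain the large-intersection element $T_x$ so that the local version of Lemma \ref{lem:TechnicalA} applies. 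All these properties are inherited from the covering construction of \cite{BuHaLa14}, but they need to be invoked explicitly when combining the Cl\'ement stability with the jump-propagation estimate.
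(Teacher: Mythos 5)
Your proposal is correct and its skeleton coincides with the paper's: both work over the covering $\{\mcT_{h,x}\}$, both freeze the coefficient at $\beta_{h,x}$ so that the perturbation is $O(h)\|\nabla v\|_{\mcT_{h,x}}$ via \eqref{eq:betahx} and an inverse estimate, and both finish by summing over the covering and invoking Lemma \ref{lem:TechnicalA}. The one step you execute genuinely differently is the estimate of $(I-\pi_h)$ applied to the frozen, piecewise-constant part: the paper packages this as the scaled Poincar\'e inequality \eqref{eq:TechBe} on $DP_0(\mcT_{h,x})$ and proves it by a scaling-plus-compactness argument over the finitely many reference configurations, whereas you argue constructively, anchoring at the large-intersection element $T_x$, using that the Cl\'ement operator reproduces constants, and propagating $\nabla v-\nabla v|_{T_x}$ along face paths through the jumps $[n_F\cdot\nabla v]$ --- the same mechanism the paper uses to verify \eqref{eq:L2jumpest} inside Lemma \ref{lem:TechnicalAA}, now applied at the gradient level. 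Your version is more elementary and makes the constants traceable, at the price of the extra bookkeeping you already flag (the Cl\'ement-enlarged patch $\mcT_{h,x}^+$ and the explicit term $\beta_{h,x}\cdot(P_{\Gammah}-P_{\Gammah,x})\nabla v$, which the paper absorbs by silently passing from $\nablash v$ to $\nabla v$ between \eqref{eq:TechBa} and \eqref{eq:TechBb}); the paper's compactness route is shorter but non-constructive. One small imprecision: $w=\beta_h\cdot\nablash v$ is not piecewise constant when $\beta_h$ is the elementwise quadratic of Section \ref{Sec:QuadErrors} --- only the frozen part $\beta_{h,x}\cdot P_{\Gammah,x}\nabla v|_{T}$ is --- but since your splitting isolates exactly that part and estimates the remainder in $L^\infty$, nothing in your argument actually relies on the claim, and the inverse estimate \eqref{eq:inverseKtoT} still applies because $w-c_x$ is an elementwise polynomial of bounded degree.
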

\begin{proof}
We use a covering $\{\mcT_{h,x} : x \in \mathcal{X}_h\}$ as described 
above. For each set $\mcT_{h,x}$ of elements in the covering 
we have 
\begin{align}
\nonumber
&h \|(I-\pi_h)(\beta_h\cdot \nabla_{\Gammah} v)\|^2_{\mcK_{h,x}} 
\\ \label{eq:TechBa}
&\qquad \lesssim \|(I-\pi_h)(\beta_h\cdot \nabla_{\Gammah} v)\|^2_{\mcT_{h,x}}
\\ \label{eq:TechBb}
&\qquad \lesssim \|(I-\pi_h)(\beta_{h,x}\cdot \nabla v)\|^2_{\mcT_{h,x}}
+ \|(I-\pi_h)((\beta_h - \beta_{h,x})\cdot \nabla v)\|^2_{\mcT_{h,x}}
\\ \label{eq:TechBc}
&\qquad \lesssim h \|[n_F\cdot\nabla v]\|^2_{\mcF_{h,x}}
+ \|\beta_h - \beta_{h,x}\|^2_{L^\infty(\mcT_{h,x})} 
\| \nabla v \|^2_{\mcT_{h,x}}
\\ \label{eq:TechBd}
&\qquad \lesssim h \|[n_F\cdot\nabla v]\|^2_{\mcF_{h,x}}
+ \| v \|^2_{\mcT_{h,x}}
\end{align}
Here we used the following estimates. (\ref{eq:TechBa}): An inverse 
bound to pass from $\mcK_{h,x}$ to $\mcT_{h,x}$. (\ref{eq:TechBb}): 
Added and subtracted $\beta_{h,x}$ and used the triangle inequality.
(\ref{eq:TechBc}): The second term is directly estimated and for 
the first term we used the following Poincar\'e inequality  
\begin{equation}\label{eq:TechBe}
\|(I-\pi_h) w \|_{\mcT_{h,x}}^2 \lesssim h \|[w]\|^2_{\mcF_{h,x}} 
\qquad \forall w \in DP_0(\mcT_{h,x}) 
\end{equation}
where $DP_0(\mcT_{h,x})$ is the space of piecewise constant functions 
on $\mcT_{h,x}$. (\ref{eq:TechBd}): 
The bound (\ref{eq:betahx}) followed by an inverse estimate to remove the gradient.

\paragraph{Verification of (\ref{eq:TechBe}).}
We first map $\mcT_{h,x}$ to a reference configuration $\widehat{\mcT}_{h,x}$ and then apply a 
compactness argument based on the observation that if the right hand 
side is zero then $w$ must be constant on $\mcT_{h,x}$ but then also 
the left hand side is zero due to the interpolation operator $I-\pi_h$. 
Thus the inequality hold on on the reference configuration and we 
map back $\mcT_{h,x}$. Finally, we note that due to quasiuniformity 
there is a uniform bound on the number reference configurations necessary
since the number of elements in $\mcT_{h,x}$ is uniformly bounded

\paragraph{Conclusion.} Finally, summing over the covering sets 
and using Lemma 
\ref{lem:TechnicalA} we obtain
\begin{align}
h \|(I-\pi_h)(\beta_h\cdot \nabla_{\Gammah} v)\|^2_{\mcK_{h}}
&\lesssim \| v \|^2_{\mcT_{h}} + h \|[n_F\cdot\nabla v]\|^2_{\mcF_{h}}
\\
&\lesssim h \| v \|^2_{\mcK_{h}} + 
h \|[n_F\cdot\nabla u]\|^2_{\mcF_h}
\end{align}
which concludes the proof.
\end{proof}
\begin{lem}\label{lem:TechnicalC} It holds
\begin{equation}
\| \beta_h \cdot \nablash v\|^2_{\mcT_h}
\lesssim
 h \| v \|^2_{\mcK_h} + 
 h \| \beta_h \cdot \nablash v\|^2_{\mcK_h} + h \tn v \tn^2_{\mcF_h}
 \qquad \forall v \in V_h
\end{equation}
for all $h\in (0,h_0]$ with $h_0$ small enough.
\end{lem}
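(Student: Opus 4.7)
The plan is to combine the covering construction from \cite{BuHaLa14} with a propagation argument: pass from an arbitrary element of a patch $\mcT_{h,x}$ to the designated large-intersection element $T_x$ via chains of neighbours, control the jumps arising on shared faces by the normal gradient jump stabilisation, and finally descend from $T_x$ down to its large surface piece $K_x$. The finite-overlap property of the covering will then permit the local bound to be summed back to the global one on $\mcT_h$ and $\mcK_h$.

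On a single patch $\mcT_{h,x}$, I first freeze the coefficient by replacing $\beta_h$ with the constant $\beta_{h,x}$ from (\ref{eq:betahx}), at a cost of $h^2\|\nabla v\|^2_{\mcT_{h,x}}$. The frozen quantity $\beta_{h,x}\cdot\nablash v$ is piecewise constant on $\mcT_{h,x}$ up to the element-wise variation of $n_h$, and I propagate its value from $T_x$ to every other element of the patch using the identity $\nabla v|_{T_2}-\nabla v|_{T_1}=[n_F\cdot\nabla v]\,n_F$ across every interior face $F$, together with the geometric control $|n_{h,1}-n_{h,2}|\lesssim h$ implied by (\ref{assum:geom}). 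Since the path has uniformly bounded length this yields
\[
\|\beta_{h,x}\cdot\nablash v\|^2_{\mcT_{h,x}}
\lesssim
\|\beta_{h,x}\cdot\nablash v\|^2_{T_x}
+h\,\|[n_F\cdot\nabla v]\|^2_{\mcF_{h,x}}
+h^2\|\nabla v\|^2_{\mcT_{h,x}},
\]
and the analogous iteration applied directly to $\nabla v$ gives $\|\nabla v\|^2_{\mcT_{h,x}}\lesssim \|\nabla v\|^2_{T_x}+h\,\tn v\tn^2_{\mcF_{h,x}}$, mirroring (\ref{eq:L2jumpest}).

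The second movement is from $T_x$ down to $K_x$. Because $v|_{T_x}$ is affine and $|T_x|\sim h|K_x|\sim h^3$, the quantity $\beta_{h,x}\cdot\nablash v$ is constant on $T_x$ and satisfies $\|\beta_{h,x}\cdot\nablash v\|^2_{T_x}\sim h\|\beta_{h,x}\cdot\nablash v\|^2_{K_x}$; unfreezing $\beta_{h,x}\mapsto\beta_h$ once more costs $h^2\|\nabla v\|^2_{K_x}$. The standard inverse inequality for the linear function $v|_{T_x}$ on the shape-regular planar piece $K_x$ of area $\sim h^2$ gives $\|\nabla v\|^2_{K_x}\lesssim h^{-2}\|v\|^2_{K_x}$, which converts every leftover $h^2\|\nabla v\|^2$ residual, both on $K_x$ and on $T_x$ after substituting the second iteration above, into a controlled multiple of $\|v\|^2_{\mcK_{h,x}}$ at exactly the desired order. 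Summing over $x\in\mathcal{X}_h$ using the finite-overlap property then delivers the claimed inequality.

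The delicate point is the face propagation in the first step: the stabilisation controls only the scalar normal jump $[n_F\cdot\nabla v]$, so every contribution arising from the discrepancy of $P_\Gammah=I-n_h\otimes n_h$ across a face $F$ has to be absorbed by the $O(h)$ factor from (\ref{assum:geom}). The resulting $h^2\|\nabla v\|^2_{\mcT_{h,x}}$ tails must then be chased through the same covering, reduced to $T_x$, and finally converted via the inverse inequality on $K_x$ into the $h\|v\|^2_{\mcK_h}$ term on the right-hand side. Verifying that nothing of an order larger than $h$ survives is the bookkeeping on which the argument hinges, and it is made possible precisely by the existence of the large-intersection element $T_x$ in every patch.
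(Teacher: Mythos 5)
Your overall architecture --- covering patches, freezing $\beta_h$ to the constant $\beta_{h,x}$, chaining the resulting piecewise constant quantity across interior faces to the large-intersection element $T_x$, and then descending from $T_x$ to $K_x$ --- is essentially the paper's argument (the paper packages the chaining as a compactness/norm-equivalence statement for $DP_0(\mcT_{h,x})$, you do it by explicit iteration, which is equivalent and in fact more explicit about the $[P_\Gammah]\langle\nabla v\rangle$ contribution). The gap is in the last step. You propose to absorb all the $h^2\|\nabla v\|^2$ residuals by reducing them to $T_x$ and then invoking ``the standard inverse inequality $\|\nabla v\|^2_{K_x}\lesssim h^{-2}\|v\|^2_{K_x}$'' on the planar piece $K_x$. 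That inequality is false for the full gradient: the component $n_x\cdot\nabla v$ of the affine function $v|_{T_x}$ normal to the plane containing $K_x$ is invisible on $K_x$ (take $v$ equal to the signed distance to that plane: $v\equiv 0$ on $K_x$ while $\|\nabla v\|^2_{K_x}=|K_x|\neq 0$). Only the in-plane gradient obeys such a bound, and the residuals you must kill genuinely contain the full gradient --- both through the $[P_\Gammah]\langle\nabla v\rangle=O(h)|\nabla v|$ terms of your face propagation and through the coefficient-freezing error --- so the normal component cannot be discarded.

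The paper closes this loop differently: it bounds $h^2\|\nabla v\|^2_{\mcT_{h,x}}\lesssim\|v\|^2_{\mcT_{h,x}}$ by the ordinary three-dimensional inverse inequality on tetrahedra, sums over the covering, and only then converts $\|v\|^2_{\mcT_h}\lesssim h\big(\|v\|^2_{\mcK_h}+\tn v\tn^2_{\mcF_h}\big)$ using Lemma \ref{lem:TechnicalA}. That lemma is precisely the device that recovers control of the normal variation of $v$ (hence of $n_x\cdot\nabla v$) from the surface values together with the face stabilization; it cannot be replaced by a pointwise inverse estimate on a single facet. Your proposal never invokes Lemma \ref{lem:TechnicalA}, and without it (or a re-proof of its content) the piece $h^5|n_x\cdot\nabla v|^2$ of the residual on $T_x$ is simply unaccounted for. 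Routing the leftover $h^2\|\nabla v\|^2_{\mcT_{h,x}}$ through $\|v\|^2_{\mcT_{h,x}}$ and Lemma \ref{lem:TechnicalA} repairs the argument with no other changes.
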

\begin{proof} We again consider an arbitrary set $\mcT_{h,x}$ in the covering. Adding and subtracting $\beta_{h,x}$, that satisfies the 
estimate (\ref{eq:betahx}), and using the triangle inequality we get
\begin{align}
\| \beta_h\cdot \nablash v \|^2_{\mcT_{h,x}} 
&\leq \| \beta_{h,x} \cdot \nablash v \|^2_{\mcT_{h,x}}
+ \| (\beta_h - \beta_{h,x}) \cdot \nablash v \|^2_{\mcT_{h,x}}
\\ \label{eq:TechCa}
&\lesssim \Big( h \| \beta_{h,x} \cdot \nablash v \|^2_{K_{x}}
+ h \tn v \tn^2_{\mcF_{h,x}} \Big) 
+ h^2 \| \nabla v \|^2_{\mcT_{h,x}}
\\ \label{eq:TechCb}
&\lesssim \Big( h \| \beta_{h,x} \cdot \nablash v \|^2_{\mcK_{h,x}}
+ h \tn v \tn^2_{\mcF_{h,x}} \Big) 
+ \| v \|^2_{\mcT_{h,x}}
\end{align}
Here we used the following estimates: (\ref{eq:TechCa}): Again 
using a compactness argument we conclude that
\begin{align}
\|w\|^2_{\mcT_{h,x}} &\lesssim \| w \|^2_{T} 
+ h\|[w]\|^2_{\mcF_{h,x}} \qquad \forall w \in DP_0(\mcT_{h,x})
\end{align}
for any $T \in \mcT_{h,x}$. Now taking $T=T_{x}$, the 
element with a large intersection $T\cap \Gammah$ we also 
have the inverse estimate 
$\| w \|_{T_{x}}^2 \lesssim h \| w \|^2_{K_{x}}$ 
since $w$ is constant on $T$. 
(\ref{eq:TechCb}): Follows directly from the fact that 
$K_{x} \in {\mcK_{h,x}}$.

Summing over the sets in the covering gives
\begin{align}
\| \beta_h\cdot \nablash v \|^2_{\mcT_{h}} 
\lesssim h \| \beta_h \cdot \nablash v \|^2_{\mcK_h} 
+ h \tn v \tn^2_{\mcF_{h}}
+ \| v \|^2_{\mcT_h}
\end{align}
and using Lemma \ref{lem:TechnicalA} we can bound the last 
term and arrive at 
\begin{align}
\| \beta_h\cdot \nablash v \|^2_{\mcT_{h}} 
\lesssim h \| \beta_h \cdot \nablash v \|^2_{\mcK_h} 
+ h \tn v \tn^2_{\mcF_{h}}
+ h \| v \|^2_{\mcK_h}
\end{align}
which concludes the proof.
\end{proof}

\subsection{Stability Estimates}
\paragraph{Assumption.} We assume that the discrete coefficients 
$\alpha_h$ and $\beta_h$ are such that:
\begin{equation}\label{eq:bethalphahassump}
0< C \leq \alpha_h(x) - \frac{1}{2}\divsh \beta_h(x) \quad \forall 
x \in \Gammah
\end{equation}
\begin{equation}\label{eq:bethedgeestassumption}
\|[n_E\cdot\beta_h]\|_{L^\infty(\mcE_h)} \lesssim h^2
\end{equation}
for all $h\in (0,h_0]$ with $h_0$ small enough.

We return to the construction of $\alpha_h$ and $\beta_h$ 
in Section \ref{Sec:QuadErrors}.

\begin{lem}\label{lem:staba} There is a positive 
constant $m_0$ such that 
\begin{equation}
m_0\Big( \| v \|_{\mcK_h}^2 + h \tn v \tn_{\mcF_h}^2\Big) 
\leq A_h(v,v)\qquad \forall v \in V_h
\end{equation}
for all $h \in (0,h_0]$ with $h_0$ small enough.
\end{lem}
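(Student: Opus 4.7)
The plan is to establish coercivity by testing $A_h(v,v)$ and integrating the convection term by parts on each surface element $K \in \mcK_h$. Specifically, on each $K$ we have $\beta_h \cdot \nablash v \cdot v = \tfrac{1}{2} \beta_h \cdot \nablash(v^2)$, so surface integration by parts yields
\begin{equation*}
(\beta_h \cdot \nablash v, v)_{\Gammah} = -\tfrac{1}{2}(\divsh \beta_h, v^2)_{\Gammah} + \tfrac{1}{2} \sum_{E \in \mcE_h} \int_E [n_E \cdot \beta_h]\, v^2 \,ds,
\end{equation*}
where the boundary contributions collapse to a sum over interior edges $E \in \mcE_h$ since $v$ is continuous; the jump that survives is exactly the jump of the co-normal component of $\beta_h$ (note the normal $n_E$ flips between neighboring facets).

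Combining with the $\alpha_h$ term and invoking the first coercivity assumption (\ref{eq:bethalphahassump}), I obtain
\begin{equation*}
A_h(v,v) \geq C \|v\|^2_{\mcK_h} + c_F h \tn v \tn^2_{\mcF_h} + \tfrac{1}{2} \sum_E \int_E [n_E \cdot \beta_h]\, v^2\, ds.
\end{equation*}
The only nonstandard obstacle is the edge term, which would vanish in the flat/meshed case treated in \cite{BuHa04} because $\beta$ would be continuous across mesh edges, but here it appears as a geometric consistency error. I bound it by Hölder and the second assumption (\ref{eq:bethedgeestassumption}):
\begin{equation*}
\Bigl|\sum_E \int_E [n_E \cdot \beta_h] v^2\, ds\Bigr| \lesssim \|[n_E\cdot \beta_h]\|_{L^\infty(\mcE_h)} \|v\|^2_{\mcE_h} \lesssim h^2 \|v\|^2_{\mcE_h}.
\end{equation*}

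This is precisely where Lemma \ref{lem:TechnicalAA} is designed to help: it converts the edge $L^2$ norm into the quantities we actually control. Applying it gives
\begin{equation*}
h^2 \|v\|^2_{\mcE_h} \lesssim h \|v\|^2_{\mcK_h} + h^3 \tn v \tn^2_{\mcF_h}.
\end{equation*}
For $h \in (0,h_0]$ with $h_0$ sufficiently small (depending on $C$ and $c_F$), the first term is absorbed into $C \|v\|^2_{\mcK_h}$ and the second into $c_F h \tn v \tn^2_{\mcF_h}$, leaving a positive constant $m_0 > 0$ so that
\begin{equation*}
A_h(v,v) \geq m_0 \bigl( \|v\|^2_{\mcK_h} + h \tn v \tn^2_{\mcF_h} \bigr),
\end{equation*}
as required. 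The key subtlety is that the stabilization term $j_h$ is what makes Lemma \ref{lem:TechnicalAA} sharp enough to beat the $O(h^2)$ edge inconsistency; without the gradient jump penalty one would have no way to control $h^2 \|v\|_{\mcE_h}^2$ beyond $h \|v\|_{\mcK_h}^2$, which is marginal.
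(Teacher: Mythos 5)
Your proof is correct and follows essentially the same route as the paper: the elementwise integration by parts producing the $\divsh\beta_h$ and edge-jump terms, the H\"older bound on the edge term via assumption (\ref{eq:bethedgeestassumption}), the conversion $h^2\|v\|^2_{\mcE_h}\lesssim h\|v\|^2_{\mcK_h}+h^3\tn v\tn^2_{\mcF_h}$ via Lemma \ref{lem:TechnicalAA}, and absorption for $h_0$ small. Your closing remark on the role of the stabilization term in making Lemma \ref{lem:TechnicalAA} available is also consistent with the paper's emphasis.
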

\begin{proof}
Integrating by parts elementwise over the surface mesh 
$\mcK_h$ we obtain the identity
\begin{equation}\label{eq:stabaa}
2 (\beta_h \cdot \nablash v,v)_{\mcK_h} = ([n_E \cdot \beta_h] v,v)_{\mcE_h} 
- (\nablash\cdot\beta_h)v,v)_{\mcK_h}
\end{equation}
The first term on the right hand side may be estimated 
as follows
\begin{align}
([n_E \cdot \beta_h] v,v)_{\mcE_h} 
&= \| [n_E \cdot \beta_h] \|_{L^\infty(\mcE_h)} \|v\|^2_{\mcE_h}
\\
&\lesssim h^2 \| v \|^2_{\mcE_h}
\\
&\lesssim h\| v \|^2_{\mcK_h} + h^3 \tn v \tn^2_{\mcF_h}
\end{align}
where we used H\"older's inequality, the assumption 
(\ref{eq:bethedgeestassumption}) on $\beta_h$, and 
at last Lemma \ref{lem:TechnicalAA}. Thus we arrive at 
the estimate 
\begin{equation}\label{eq:stabab}
|([n_E \cdot \beta_h] v,v)_{\mcE_h}| 
\leq C_\star h \Big( \| v \|^2_{\mcK_h} 
+ h^2 \tn v \tn^2_{\mcF_h} \Big)
\end{equation}
We now have
\begin{align}
A_h(v,v) &= (\beta_h \cdot \nablash v, v)_\Gammah 
+ (\alpha v,v)_\Gammah 
+ (c_F h [n_F \cdot \nabla v],[n_F \cdot \nabla v])_{\mcF_h}
\\
&=((\alpha - 2^{-1}\divsh \beta_h) v,v)_\Gammah 
\\ \nonumber
&\qquad + 2^{-1}([n_E \cdot \beta_h] v,v)_{\mcE_h} 
+ (c_F h [n_F \cdot \nabla v],[n_F \cdot \nabla v])_{\mcF_h}
\\
&\geq \inf_{\mcK_h}(\alpha - 2^{-1} \divsh \beta_h)
\|v\|_{\mcK_h}^2
\\ \nonumber
&\qquad - 2^{-1} C_\star h \Big( \| v \|^2_{\mcK_h} 
+ h^2 \tn v \tn^2_{\mcF_h} \Big) 
+ c_F h \tn v \tn^2_{\mcF_h}
\\
&\geq \inf(\alpha - 2^{-1} \divsh \beta_h 
- 2^{-1} C_\star h ) \|v\|^2_{\mcK_h}
\\ \nonumber
&\qquad + \min(c_F- 2^{-1}C_\star h^2) h\tn v \tn^2_{\mcF_h}
\end{align}
where we used the identity (\ref{eq:stabaa}), the estimate 
(\ref{eq:stabab}), and then we collected the terms. Thus we 
find that 
\begin{equation}
\| v \|_{\mcK_h}^2 + h\|[n_F\cdot \nabla v]\|_{\mcF_h}^2 
\lesssim A_h(v,v) 
\end{equation}
for $c_F>0$ and $h \in (0,h_0]$ with $h_0$ small enough.
\end{proof}

\begin{lem}\label{lem:stabb} There are positive constants 
$m_1$ and $m_2$ such that 
\begin{equation}
m_1 h\|\beta_h \cdot \nablas v \|^2_{\mcK_h} 
- m_2 A_h(v,v)
\leq 
A_h(v,h \pi_h(\beta_h \cdot \nablas v)) 
\qquad 
v \in V_h
\end{equation}
for all $h \in (0,h_0]$ with $h_0$ small enough.
\end{lem}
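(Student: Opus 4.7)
The plan is to test with $w = h\pi_h(\beta_h\cdot\nablash v)$ in $A_h(v,\cdot)$, extract the square $h\|\beta_h\cdot\nablash v\|_{\mcK_h}^2$ from the convective term, and show that every remaining contribution can be split by Young's inequality into an $\epsilon$-portion absorbed into this square and a part controlled by $A_h(v,v)$ via Lemma \ref{lem:staba} and the three technical lemmas \ref{lem:TechnicalA}--\ref{lem:TechnicalC}.

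First I would expand
\begin{equation*}
A_h(v,h\pi_h(\beta_h\cdot\nablash v)) = h(\beta_h\cdot\nablash v,\pi_h(\beta_h\cdot\nablash v))_{\mcK_h} + h(\alpha_h v,\pi_h(\beta_h\cdot\nablash v))_{\mcK_h} + h\,j_h(v,\pi_h(\beta_h\cdot\nablash v)),
\end{equation*}
and rewrite the first term as
\begin{equation*}
h\|\beta_h\cdot\nablash v\|_{\mcK_h}^2 - h(\beta_h\cdot\nablash v,(I-\pi_h)(\beta_h\cdot\nablash v))_{\mcK_h}.
\end{equation*}
Cauchy--Schwarz, Young, and Lemma \ref{lem:TechnicalB} bound the error term by $\tfrac{h\epsilon}{2}\|\beta_h\cdot\nablash v\|_{\mcK_h}^2 + C_\epsilon h(\|v\|_{\mcK_h}^2 + \tn v\tn_{\mcF_h}^2)$, and the latter is controlled by $A_h(v,v)$ via Lemma \ref{lem:staba} (since $h\|v\|_{\mcK_h}^2 \lesssim A_h(v,v)$ for $h$ small and $h\tn v\tn_{\mcF_h}^2 \lesssim A_h(v,v)$ directly).

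For the zeroth-order term I would use the triangle inequality $\|\pi_h(\beta_h\cdot\nablash v)\|_{\mcK_h} \le \|\beta_h\cdot\nablash v\|_{\mcK_h} + \|(I-\pi_h)(\beta_h\cdot\nablash v)\|_{\mcK_h}$ together with Lemma \ref{lem:TechnicalB}, and then Cauchy--Schwarz plus Young against $\|v\|_{\mcK_h}$, giving again an $\epsilon$-share of $h\|\beta_h\cdot\nablash v\|_{\mcK_h}^2$ plus $C_\epsilon A_h(v,v)$.

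The main obstacle is the jump term $hj_h(v,\pi_h(\beta_h\cdot\nablash v))$: the test function is obtained by applying $\pi_h$ to a discontinuous quantity, so $[n_F\cdot\nabla\pi_h(\beta_h\cdot\nablash v)]$ does not vanish and must be controlled. I would estimate it by elementwise inverse inequalities (face to element for the gradient, $H^1$ to $L^2$ for the polynomial, and $L^2$-stability of $\pi_h$) to obtain
\begin{equation*}
\|[n_F\cdot\nabla \pi_h(\beta_h\cdot\nablash v)]\|_{\mcF_h}^2 \lesssim h^{-3}\|\beta_h\cdot\nablash v\|_{\mcT_h}^2.
\end{equation*}
Lemma \ref{lem:TechnicalC} then converts the right-hand side into $h^{-2}(\|v\|_{\mcK_h}^2 + \|\beta_h\cdot\nablash v\|_{\mcK_h}^2 + \tn v\tn_{\mcF_h}^2)$, and after Cauchy--Schwarz against $h\tn v\tn_{\mcF_h}$ the jump term is majorized by $h\tn v\tn_{\mcF_h}(\|v\|_{\mcK_h} + \|\beta_h\cdot\nablash v\|_{\mcK_h} + \tn v\tn_{\mcF_h})$. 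One final application of Young's inequality yields the same structural bound $\epsilon\, h\|\beta_h\cdot\nablash v\|_{\mcK_h}^2 + C_\epsilon A_h(v,v)$.

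Collecting the three estimates gives
\begin{equation*}
A_h(v,h\pi_h(\beta_h\cdot\nablash v)) \ge (1-C\epsilon)\,h\|\beta_h\cdot\nablash v\|_{\mcK_h}^2 - C_\epsilon A_h(v,v),
\end{equation*}
and choosing $\epsilon$ small enough fixes $m_1 := 1 - C\epsilon > 0$ and $m_2 := C_\epsilon$, finishing the proof.
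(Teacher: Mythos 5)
Your proposal is correct and follows essentially the same route as the paper: test with $w=h\pi_h(\beta_h\cdot\nablash v)$, peel off $h\|\beta_h\cdot\nablash v\|^2_{\mcK_h}$ from the convective term, and absorb the $(I-\pi_h)$ remainder, the zeroth-order term, and the jump term via Young's inequality using Lemmas \ref{lem:TechnicalB}, \ref{lem:TechnicalC}, and \ref{lem:staba}. The only (harmless) deviation is that you bound $\|\pi_h(\beta_h\cdot\nablash v)\|_{\mcK_h}$ in the zeroth-order term by a triangle inequality plus Lemma \ref{lem:TechnicalB}, where the paper instead passes to $\mcT_h$ by an inverse estimate and the $L^2$-stability of $\pi_h$ before invoking Lemma \ref{lem:TechnicalC}; both yield the same structural bound.
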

\begin{proof} We have
\begin{align}
A_h(v,h\pi_h(\beta_h \cdot \nabla v)) 
&= (\beta_h \cdot \nablash v, h \beta_h \cdot \nablash v)_{\mcK_h} 
\\ \nonumber
&\qquad\underbrace{- (\beta_h \cdot \nablash v, h (I - \pi_h) (\beta_h \cdot \nablash v))_{\mcK_h}}_{I} 
\\ \nonumber
&\qquad \underbrace{+  (\alpha u, h \pi_h (\beta_h \cdot \nablash u))_{\mcK_h}}_{II}
\\ \nonumber
&\qquad
\underbrace{+j_h(v,h \pi_h (\beta_h \cdot \nablash v)}_{III}
\\ \label{eq:stabbbb}
&\geq h\| \beta_h \cdot \nablash v \|^2_{\mcK_h} 
- |I + II + III| 
\end{align}
We now have the estimate
\begin{equation}\label{eq:stabbc}
|I+II+III|\leq C_1 (\delta + \delta^{-1}) A_h(v,v) 
+ C_2 \delta  h \|\beta_h \cdot \nablash v \|^2_{\mcK_h}
\end{equation}
for $\delta>0$. Thus taking $\delta$ small enough the desired 
estimate follows directly by combining (\ref{eq:stabbbb}) and 
(\ref{eq:stabbc}).

\paragraph{Verification of (\ref{eq:stabbc}).} The estimate follows by combining the following estimates of Terms $I-III$.

\paragraph{Term $\bfI$.} It holds 
\begin{align}
|I| &\lesssim \delta h \|  \beta_h \cdot \nablash v \|^2_{\mcK_h} 
+ \delta^{-1} h \| (I-\pi_h) \beta_h \cdot \nablash v \|^2_{\mcK_h}
\\
&\lesssim \delta h \|  \beta_h \cdot \nablash v \|^2_{\mcK_h} 
+ \delta^{-1} h \Big( \| v \|^2_{\mcK_h} + \tn v \tn^2_{\mcF_h} \Big)
\end{align}
where we used the inequality Cauchy-Schwarz, the 
arithmetic-geometric mean inequality with parameter $\delta>0$, 
and Lemma \ref{lem:TechnicalB}.

\paragraph{Term $\bfI\bfI$.} It holds
\begin{align}
|II|&\lesssim \|\alpha\|_{L^{\infty}(\mcK_h)} \|v \|_{\mcK_h}  
h\|\pi_h (\beta_h \cdot \nablash v)\|_{\mcK_h}
\\
&\lesssim \delta^{-1} h \| v \|_{\mcK_h}^2  
+ \delta h \|\pi_h (\beta_h \cdot \nablash v)\|^2_{\mcK_h}
\\
&\lesssim \delta^{-1} h \| v \|_{\mcK_h}^2  
+ \delta \|\pi_h (\beta_h \cdot \nablash v)\|^2_{\mcT_h}
\\
&\lesssim \delta^{-1} h \| v \|_{\mcK_h}^2  
+ \delta \| \beta_h \cdot \nablash v\|^2_{\mcT_h}
\\
&\lesssim \delta^{-1} h \| v \|_{\mcK_h}^2  
+ \delta h \Big(\| v \|^2_{\mcK_h} + 
\|\beta_h \cdot \nablash v \|^2_{\mcK_h} + \tn v \tn^2_{\mcF_h} \Big)
\\
&\lesssim (\delta^{-1} + \delta) h \| v \|_{\mcK_h}^2  
+ \delta h \tn v \tn^2_{\mcF_h} + \delta h \|\beta_h \cdot \nablash v \|^2_{\mcK_h}
\end{align}
where we used H\"older's inequality, the arithmetic-geometric 
mean inequality with parameter $\delta>0$, the inverse estimate 
(\ref{eq:inverseKtoT}) to pass from $\mcK_h$ to $\mcT_h$, the boundedness (\ref{eq:pihl2stab}) of $\pi_h$ on $L^2(\mcT_h)$, 
Lemma \ref{lem:TechnicalC}, and finally we rearranged the terms.

\paragraph{Term $\bfI\bfI\bfI$.} It holds
\begin{align}
|III| &\lesssim h \tn v \tn_{\mcF_h} 
\tn h\pi_h(\beta_h \cdot \nabla v) \tn_{\mcF_h}
\\
&\lesssim \delta^{-1} h \tn v \tn_{\mcF_h}^2 
+ \delta h^3 \tn \pi_h(\beta_h \cdot \nabla v) \tn_{\mcF_h}^2
\\
&\lesssim \delta^{-1} h \tn v \tn_{\mcF_h}^2 
+ \delta h^2 \| \nabla \pi_h(\beta_h \cdot \nabla v) \|_{\mcT_h}^2
\\
&\lesssim \delta^{-1} h \tn v \tn_{\mcF_h}^2 
+ \delta \| \pi_h(\beta_h \cdot \nabla v) \|_{\mcT_h}^2
\\
&\lesssim \delta^{-1} h \tn v \tn_{\mcF_h}^2 
+ \delta \| \beta_h \cdot \nabla v \|_{\mcT_h}^2
\\
&\lesssim \delta^{-1} h \tn  v \tn_{\mcF_h}^2 
+ \delta h \Big(\| v \|^2_{\mcK_h} + 
\|\beta_h \cdot \nablash v \|^2_{\mcK_h} + \tn v \tn^2_{\mcF_h} \Big)
\\
&\lesssim  \delta h \| v \|_{\mcK_h}^2  
+ (\delta^{-1} + \delta) h \tn v \tn^2_{\mcF_h} + \delta h \|\beta_h \cdot \nablash v \|^2_{\mcK_h}
\end{align}
where we used the Cauchy-Schwarz inequality, the arithmetic-geometric mean
inequality with parameter $\delta>0$, the inverse estimate (\ref{eq:inverseFtoT}) to pass from $\mcF_h$ to $\mcT_h$, an inverse inequality to remove the gradient, the boundedness (\ref{eq:pihl2stab}) of $\pi_h$ on $L^2(\mcT_h)$, 
Lemma \ref{lem:TechnicalC}, and finally we rearranged the terms.
\end{proof}

\begin{prop}\label{prop:stab} There is a positive constant 
$m_3$ such that 
\begin{equation}\label{eq:stabest}
m_3 \tn  v \tn_h
\leq \sup_{w \in V_h \setminus \{ 0\}} \frac{A_h(v,w)}{ \tn w \tn_h}
\qquad  \forall v \in V_h 
\end{equation}
for all $h\in (0,h_0]$ with $h_0$ small enough.
\end{prop}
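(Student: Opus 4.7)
The plan is a standard Petrov--Galerkin-style argument: given $v \in V_h$, I will choose the test function
\[
w = v + \gamma h\,\pi_h(\beta_h \cdot \nablash v)
\]
for a small but fixed parameter $\gamma>0$ to be determined, and then show that $A_h(v,w) \gtrsim \tn v \tn_h^2$ while $\tn w \tn_h \lesssim \tn v \tn_h$. The bilinearity of $A_h$ gives
\[
A_h(v,w) = A_h(v,v) + \gamma\, A_h\!\bigl(v,\,h\pi_h(\beta_h \cdot \nablash v)\bigr).
\]
Applying Lemma \ref{lem:stabb} to the second term produces a positive contribution of size $\gamma m_1 h \|\beta_h \cdot \nablash v\|_{\mcK_h}^2$ at the cost of $-\gamma m_2 A_h(v,v)$, so after combining with the first term and using Lemma \ref{lem:staba} to lower bound the remaining $(1-\gamma m_2)A_h(v,v)$ by $\|v\|_{\mcK_h}^2 + h\tn v \tn_{\mcF_h}^2$, fixing $\gamma$ small enough that $1 - \gamma m_2 > 0$ yields
\[
A_h(v,w) \gtrsim \|v\|_{\mcK_h}^2 + h \|\beta_h \cdot \nablash v\|_{\mcK_h}^2 + h \tn v \tn_{\mcF_h}^2 = \tn v \tn_h^2.
\]

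The main technical step — and the one I expect to be the most delicate — is to prove the continuity bound
\[
\tn w \tn_h \le \tn v \tn_h + \gamma\, h\, \tn \pi_h(\beta_h \cdot \nablash v) \tn_h \lesssim \tn v \tn_h.
\]
The issue is that $h\pi_h(\beta_h \cdot \nablash v)$ is a piecewise linear function on $\mcT_h$ whose natural control is only in the bulk $L^2(\mcT_h)$-norm via $\pi_h$-stability, whereas $\tn \cdot \tn_h$ involves surface and jump quantities together with a streamline derivative. I will therefore work term by term with $\phi := \pi_h(\beta_h \cdot \nablash v)$: use the inverse estimate \eqref{eq:inverseKtoT} to pass from $\|\phi\|_{\mcK_h}$ to $h^{-1/2}\|\phi\|_{\mcT_h}$, apply the $L^2$-stability \eqref{eq:pihl2stab} of $\pi_h$, and then invoke Lemma \ref{lem:TechnicalC} to relate $\|\beta_h \cdot \nablash v\|_{\mcT_h}^2$ to $h(\|v\|_{\mcK_h}^2 + \|\beta_h \cdot \nablash v\|_{\mcK_h}^2 + \tn v \tn_{\mcF_h}^2)$; this handles the $\|\cdot\|_{\mcK_h}$ piece of $\tn \phi \tn_h$. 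For the jump term $h\|[n_F\cdot \nabla\phi]\|_{\mcF_h}^2$ and the streamline term $h\|\beta_h\cdot\nablash \phi\|_{\mcK_h}^2$ I will additionally use the face trace/inverse estimate \eqref{eq:inverseFtoT} and a standard bulk inverse inequality to remove the gradient on the piecewise linear $\phi$, reducing everything again to $\|\phi\|_{\mcT_h}^2$ and then invoking Lemma \ref{lem:TechnicalC} as before. Accounting for the overall factor $h^2$ in $h^2 \tn \phi \tn_h^2$ and the smallness of $h \le h_0$, each contribution is bounded by $\tn v \tn_h^2$.

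Combining the two bounds, $A_h(v,w)/\tn w \tn_h \gtrsim \tn v \tn_h$, and taking the supremum over $w \in V_h \setminus\{0\}$ yields \eqref{eq:stabest} with a positive constant $m_3$ depending on $c_F$, the coefficients $\alpha_h,\beta_h$, and the covering constants from Section 5.1, provided $h_0$ is small enough that Lemmas \ref{lem:staba}--\ref{lem:TechnicalC} apply.
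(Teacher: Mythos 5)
Your proposal is correct and follows essentially the same route as the paper: the test function $w = v + \gamma h\,\pi_h(\beta_h\cdot\nablash v)$, the combination of Lemma \ref{lem:staba} and Lemma \ref{lem:stabb} with $\gamma$ small, and the term-by-term verification of $\tn w \tn_h \lesssim \tn v \tn_h$ via the inverse estimates \eqref{eq:inverseKtoT} and \eqref{eq:inverseFtoT}, the $L^2$-stability \eqref{eq:pihl2stab} of $\pi_h$, and Lemma \ref{lem:TechnicalC} are exactly the paper's argument. No gaps.
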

\begin{proof}
Setting $w=v + \gamma h \pi_h(\beta_h \cdot \nablash v)$, for some 
positive parameter $\gamma$, we get
\begin{align}
A_h(v,w)&=A(v,v) + \gamma A(v,h \pi_h(\beta_h \cdot \nablash v))
\\
&\geq A(v,v) + \gamma m_1 h \| \beta_h \cdot \nablash v  \|^2_{\mcK_h}
- \gamma m_2 A(v,v)
\\
&=  (1-\gamma m_2 ) A(v,v) 
+ \gamma m_1 h \| \beta_h \cdot \nablash v \|^2_{\mcK_h}
\\
&=  (1-\gamma m_2 ) m_0^{-1}(\| v \|_{\mcK_h}^2 
+ h\|[n_F \cdot \nabla v]\|_{\mcF_h}^2) 
+ \gamma m_1 h \| \beta_h \cdot \nablash v \|^2_{\mcK_h}
\\
&\geq \widetilde{m}_3 \tn v  \tn_h^2
\end{align}
where we used Lemma \ref{lem:staba} and choose $0<\gamma$ small enough. 
Using the bound 
\begin{equation}\label{eq:stabesta}
\tn w \tn_h \leq C \tn v \tn_h
\end{equation}
the desired estimate follows with $m_3 = \widetilde{m}_3/C$.

\paragraph{Verification of (\ref{eq:stabesta}).} Using the triangle inequality 
\begin{align}\label{eq:stabestb}
\tn v + \gamma \pi_h( \beta_h \cdot \nablash v ) \tn_h^2
&\lesssim \tn v \tn_h^2  + \gamma^2 \tn h \pi_h( \beta_h \cdot \nablash v ) \tn_h^2
\end{align}  
where the second term takes the form
\begin{align}
\tn h \pi_h( \beta_h \cdot \nablash v ) \tn_h^2
&=h^2 \|\pi_h( \beta_h \cdot \nablash v )\|^2_{\mcK_h} 
\\ \nonumber
&\qquad + h^3 \| \beta_h\cdot \nablash \pi_h( \beta_h \cdot \nabla v )\|^2_{\mcK_h}
+ h^3 \tn \pi_h( \beta_h \cdot \nablash v )\tn^2_{\mcF_h}
\\
&=I+ II + III
\end{align}
\paragraph{Term $\bfI$.} It holds
\begin{align}
I&= h^2 \|\pi_h( \beta_h \cdot \nablash v )\|^2_{\mcK_h} 
\\
&\lesssim h \|\pi_h( \beta_h \cdot \nablash v )\|^2_{\mcT_h}
\\
&\lesssim h \| \beta_h \cdot \nablash v \|^2_{\mcT_h}
\\
&\lesssim h^2 \|v\|^2_{\mcK_h}
+ h^2 \|\beta_h \cdot \nablash v\|^2_{\mcK_h} 
+ h^2 \tn v \tn^2_{\mcF_h}
\end{align}
where we used the inverse estimate (\ref{eq:inverseKtoT}) to 
pass from $\mcK_h$ to $\mcT_h$, the boundedness (\ref{eq:pihl2stab}) of $\pi_h$ on $L^2(\mcT_h)$, and at last Lemma \ref{lem:TechnicalC}.

\paragraph{Term $\bfI\bfI$.} It holds
\begin{align}
h^3 \| \beta_h\cdot \nablash \pi_h( \beta_h \cdot \nablash v )\|^2_{\mcK_h}
&\lesssim
h^2 \| \beta_h \cdot \nablash \pi_h( \beta_h \cdot \nablash v )\|^2_{\mcT_h}
\\
&\lesssim \| \pi_h( \beta_h \cdot \nablash v )\|^2_{\mcT_h}
\\
&\lesssim \| \beta_h \cdot \nablash v \|^2_{\mcT_h}
\\
&\lesssim h\|v\|^2_{\mcK_h}
+ h \|\beta_h \cdot \nablash v\|^2_{\mcK_h} 
+ h \tn v \tn^2_{\mcF_h}
\end{align}
where used the inverse estimate (\ref{eq:inverseKtoT}) 
to pass from $\mcK_h$ to $\mcT_h$, an inverse estimate 
to remove the transport derivative, the boundedness (\ref{eq:pihl2stab}) of 
$\pi_h$ on $L^2(\mcT_h)$, and finally we used Lemma \ref{lem:TechnicalC}. 

\paragraph{Term $\bfI\bfI\bfI$.} It holds
\begin{align} 
h^3 \tn \pi_h(\beta_h \cdot \nablash v ) \tn^2_{\mcF_h} 
& \lesssim 
h^2 \| \nabla \pi_h( \beta_h \cdot \nablash v )\|^2_{\mcT_h}
\\
& \lesssim
\| \pi_h( \beta_h \cdot \nablash v )\|^2_{\mcT_h}
\\
&\lesssim \| \beta_h \cdot \nablash v \|^2_{\mcT_h}
\\
&\lesssim h\|v\|^2_{\mcK_h}
+ h \|\beta_h \cdot \nablash v\|^2_{\mcK_h} 
+ h \tn v \tn^2_{\mcF_h}
\end{align}
where we used the inverse inequality (\ref{eq:inverseFtoT}) 
to pass from $\mcF_h$ to $\mcT_h$, an inverse estimate to 
remove the gradient, the boundedness (\ref{eq:pihl2stab}) of $\pi_h$ on 
$L^2(\mcT_h)$, and finally we used Lemma \ref{lem:TechnicalC}. 

\paragraph{Conclusion of Verification of (\ref{eq:stabesta}).}
Combining the estimates of Terms $I-III$ we get 
\begin{equation}
\tn h \pi_h( \beta_h \cdot \nablash v ) \tn_h^2 
\lesssim h\|v\|^2_{\mcK_h}
+ h \|\beta_h \cdot \nablash v\|^2_{\mcK_h} 
+ h \tn v \tn^2_{\mcF_h}
\lesssim \tn v \tn_h^2
\end{equation}
and therefore, in view of (\ref{eq:stabestb}), we conclude 
that (\ref{eq:stabesta}) holds.
\end{proof}

\begin{prop}\label{prop:stabB} It holds
\begin{equation}\label{eq:stabB}
h^{3/4} \| \nablash v \|_{\mcK_h} 
\lesssim \tn v \tn_h \qquad \forall v \in V_h 
\end{equation}
for all $h\in (0,h_0]$ with $h_0$ small enough.
\end{prop}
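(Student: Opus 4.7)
The plan is to pass from the surface $\mcK_h$ to the ambient volume $\mcT_h$ via the inverse estimate that exploits the piecewise-constancy of $\nabla v$ for $v\in V_h$, then apply the standard tetrahedral inverse to replace $\|\nabla v\|_{\mcT_h}$ by $\|v\|_{\mcT_h}$, and finally invoke Lemma~\ref{lem:TechnicalA} to bound $\|v\|_{\mcT_h}$ by the surface $L^2$ norm and the gradient-jump stabilization term that together make up $\tn v\tn_h^2$.

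First I would use $|\nablash v|\leq |\nabla v|$ together with the fact that for each $T\in\mcT_h$ the gradient $\nabla v|_T$ is constant and the ratio $|K|/|T|\lesssim h^{-1}$, to obtain
\[
\|\nablash v\|_{\mcK_h}^2 \;\leq\; \|\nabla v\|_{\mcK_h}^2 \;\lesssim\; h^{-1}\|\nabla v\|_{\mcT_h}^2.
\]
Next I would apply the standard tetrahedral inverse $\|\nabla v\|_T\lesssim h^{-1}\|v\|_T$ followed by Lemma~\ref{lem:TechnicalA},
\[
\|\nabla v\|_{\mcT_h}^2 \;\lesssim\; h^{-2}\|v\|_{\mcT_h}^2 \;\lesssim\; h^{-1}\bigl(\|v\|_{\mcK_h}^2 + \tn v\tn_{\mcF_h}^2\bigr),
\]
which together yield a patchwise control of $\|\nablash v\|^2_{\mcK_h}$ by the two ingredients of $\tn\cdot\tn_h^2$ that are not the streamline derivative, namely the surface $L^2$ mass $\|v\|_{\mcK_h}^2$ and the face jump $\tn v\tn_{\mcF_h}^2$.

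To finish, I would combine the two inequalities, weigh the result by $h^{3/2}$, and use the direct bounds $\|v\|_{\mcK_h}^2\leq \tn v\tn_h^2$ and $h\,\tn v\tn_{\mcF_h}^2\leq \tn v\tn_h^2$ coming from the definition (\ref{eq:normtrippleh})--(\ref{eq:normtrippleFh}) of the energy norm. The main obstacle will be the careful balancing of $h$-powers at this last step: the chain of inverse estimates produces contributions of two different $h$-scalings on the right-hand side, and one must exploit the asymmetry that $\|v\|_{\mcK_h}^2$ and $h\tn v\tn_{\mcF_h}^2$ enter $\tn v\tn_h^2$ with different weights in order to conclude with the desired factor $h^{3/4}$ on the left (rather than a weaker factor such as $h$ that would result from a naive use of the 2D inverse on each $K$ alone). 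An alternative possibility, which I would keep in mind as a backup, is to invoke a covering argument as in the preceding technical lemmas, replacing $\|\nabla v\|_{\mcT_h}^2$ by a patchwise decomposition using the reference element $T_x$ with large intersection, together with the $L^2$-jump inequality (\ref{eq:L2jumpest}); the book-keeping is the same in spirit but localizes the contributions more tightly.
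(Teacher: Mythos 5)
There is a genuine gap: the chain of inverse estimates you propose cannot produce the exponent $3/4$. Following your steps, you arrive at
\begin{equation*}
\| \nablash v \|^2_{\mcK_h} \;\lesssim\; h^{-1}\,h^{-2}\,h\Big( \|v\|^2_{\mcK_h} + \tn v \tn^2_{\mcF_h}\Big)
\;=\; h^{-2}\|v\|^2_{\mcK_h} + h^{-3}\,\big(h\tn v \tn^2_{\mcF_h}\big),
\end{equation*}
and since $\|v\|^2_{\mcK_h}\leq \tn v\tn_h^2$ and $h\tn v\tn^2_{\mcF_h}\leq \tn v\tn_h^2$, the best you can conclude is $h^{3/2}\|\nablash v\|_{\mcK_h}\lesssim \tn v\tn_h$. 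The "careful balancing of $h$-powers" you defer to the last step cannot rescue this: both contributions already appear with too large a negative power, and the asymmetry in how $\|v\|^2_{\mcK_h}$ and $\tn v\tn^2_{\mcF_h}$ enter the energy norm only makes the second term worse, not better. Your backup plan (a covering argument with the large-intersection element $T_x$ and the jump inequality) reorganizes the same inverse estimates and lands at the same rate. Note that the paper advertises the $h^{3/4}$ gradient bound as \emph{better} than what streamline diffusion gives on flat triangulations, so one should expect that a purely norm-equivalence/inverse-estimate argument is structurally insufficient.

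The paper's proof uses a different mechanism: integration by parts on the facetted surface. Since $\nablash v$ is constant on each facet $K$, elementwise partial integration leaves only edge contributions, giving the identity $\| \nablash v \|^2_{\mcK_h} = -(v,[n_E\cdot\nabla v])_{\mcE_h}$, and Cauchy--Schwarz then bounds this by the \emph{product} $\|v\|_{\mcE_h}\,\|[n_E\cdot\nabla v]\|_{\mcE_h}$. The two factors are controlled separately and with different $h$-weights: Lemma~\ref{lem:TechnicalAA} gives $h\|v\|^2_{\mcE_h}\lesssim \tn v\tn_h^2$, while inverse estimates from $\mcE_h$ to $\mcF_h$ to $\mcT_h$ combined with Lemma~\ref{lem:TechnicalA} give $h^2\|[n_E\cdot\nabla v]\|^2_{\mcE_h}\lesssim \tn v\tn_h^2$. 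Multiplying, $h^3\|\nablash v\|^4_{\mcK_h}\lesssim \tn v\tn_h^4$, which is exactly the claimed $h^{3/4}$ rate. The gain over your approach comes precisely from this product structure, which averages the two exponents $1/2$ and $1$ into $3/4$; without the integration by parts there is nothing to average.
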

\begin{proof} Using partial integration followed 
by Cauchy-Schwarz we have
\begin{align}
\| \nablash v \|^2_{\mcK_h} &= (\nablash v, \nablash v)_{\mcK_h}
\\
&=-(v,[n_E \cdot \nabla  v])_{\mcE_h}
\\  \label{eq:propstabBprod}
&\leq \underbrace{\|v\|_{\mcE_h}}_{I} \underbrace{\|[n_E \cdot \nabla  v]\|_{\mcE_h}}_{II}
\end{align}
\paragraph{Term $\bfI$.} 
Using Lemma \ref{lem:TechnicalAA} we directly obtain
\begin{equation}
\| v \|^2_{\mcE_h} 
\lesssim 
h^{-1}\Big( \| v \|^2_{\mcK_h} 
+
h^2 \tn v \tn^2_{\mcF_h}\Big) 
\lesssim
h^{-1} \| v \|^2_{\mcK_h} 
+
h \tn v \tn_{\mcF_h}^2
\end{equation}
and we conclude that 
\begin{equation}\label{eq:propstabBtermI}
h \| v \|^2_{\mcE_h} \lesssim \| v \|^2_{\mcK_h} 
+
h^2 \tn v \tn^2_{\mcF_h}
\lesssim \tn v \tn_h^2
\end{equation}

\paragraph{Term $\bfI\bfI$.} We have the estimates
\begin{align}
\|[n_E \cdot \nabla  v]\|^2_{\mcE_h} 
&\lesssim h^{-1}\|[n_E \cdot \nabla  v ]\|^2_{\mcF_h}
\\
&\lesssim h^{-1}\|[n_E] \cdot \langle \nabla v \rangle \|^2_{\mcF_h}
+ h^{-1} \|\langle n_E\rangle \cdot [\nabla  v ]\|^2_{\mcF_h}
\\
&\lesssim h^{-1}\| h \nabla  v \|^2_{\mcF_h}
+ h^{-1} \|\langle n_E\rangle \cdot n_F  [n_F \cdot \nabla v]\|^2_{\mcF_h}
\\
&\lesssim h^{-2}\| h \nabla  v \|_{\mcT_h}^2
+ h^{-1} \| [n_F \cdot \nabla  v]\|^2_{\mcF_h}
\\
&\lesssim h^{-2}\|  v \|_{\mcT_h}^2
+ h^{-1} \| [n_F \cdot \nabla  v]\|^2_{\mcF_h}
\\
&\lesssim h^{-2}\Big( h \|  v \|_{\mcK_h}^2 
+ h\| [n_F \cdot \nabla  v]\|^2_{\mcF_h} \Big) 
+ h^{-1} \| [n_F \cdot \nabla  v]\|^2_{\mcF_h}
\\
&\lesssim h^{-1}\Big( \|  v \|_{\mcK_h}^2 
+  \| [n_F \cdot \nabla  v]\|^2_{\mcF_h} \Big)
\end{align}
Here we used the inverse inequality (\ref{eq:inverseEtoF}) 
to pass from $\mcE_h$ to $\mcF_h$, the identity 
$[ab] = [a]\langle b \rangle + \langle a \rangle [b]$, 
where $\langle a \rangle = (a^+ + a^-)/2$ is the average 
of a discontinuous function, the fact that the tangent gradient 
is continuous at a face, the inverse inequality (\ref{eq:inverseFtoT}) to pass from $\mcF_h$ to $\mcT_h$ in 
the first term and a direct estimate for the second, 
Lemma \ref{lem:TechnicalA} for the first term, and finally 
we collected the terms. We conclude that 
\begin{equation}\label{eq:propstabBtermII}
h^2 \| [ n_E \cdot \nabla v]\|^2_{\mcE_h} \lesssim \tn v \tn_h^2
\end{equation}

\paragraph{Conclusion.} Combining (\ref{eq:propstabBprod}) with the estimates (\ref{eq:propstabBtermI}) and (\ref{eq:propstabBtermII}) 
we obtain
\begin{equation}
h^3\|\nablash v \|^4_{\mcK_h} 
\lesssim 
h\|v\|^2_{\mcE_h} h^2 \| [n_E \cdot \nablash v] \|^2_{\mcE_h}
\lesssim 
\tn v \tn_h^4
\end{equation}
which concludes the proof.
\end{proof}

\section{Error Estimates}
\subsection{Strang's Lemma}

Define the forms
\begin{equation}
a(v,w) = (\beta \cdot \nabla v, w)_\Gamma + (\alpha u,v)_\Gamma, 
\qquad l(v) = (f,v)_\Gamma
\end{equation}
Then the exact solution $u$ to the convection problem 
(\ref{eq:conva}), see Proposition \ref{prop:existence}, satisfies 
\begin{equation}
a(u,v) = l(v)\qquad \forall v \in L^2(\Gamma) 
\end{equation}
We then have the following Strang Lemma.

\begin{lem}\label{lem:strang} Let $u$ be the solution to  \eqref{eq:conva} and $u_h$ the finite element approximation 
defined by \eqref{eq:fem}, then the following estimate holds 
\begin{align}\label{eq:strang}
\tn u^e-u_h \tn_h
&\lesssim h^{3/2}\| u \|_{H^2(\Gamma)}
\\ \nonumber 
&\qquad 
+ \sup_{v \in V_h \setminus 0} \frac{a((\pi_h u^e)^l,v^l) - a_h(\pi_h u^e,v)}{\tn v \tn_h}\nonumber
\\ \nonumber
&\qquad + \sup_{v \in V_h \setminus 0} \frac{l( v^l) - l_h(v)}{\tn v^l\tn_h}
\end{align} 
\end{lem}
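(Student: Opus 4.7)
The plan is to follow the standard Strang-type argument: split the error by interpolation, control the discrete part with the inf--sup stability of Proposition~\ref{prop:stab}, and absorb the non-Galerkin consistency into the right-hand side.

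First I would decompose $u^e - u_h = (u^e - \pi_h u^e) + (\pi_h u^e - u_h) =: \eta + \xi$ with $\xi \in V_h$. By the triangle inequality and the interpolation estimate (\ref{eq:interpolenergy}), the continuous part contributes at most $\tn \eta \tn_h \lesssim h^{3/2}\|u\|_{H^2(\Gamma)}$, which matches the first term of (\ref{eq:strang}). For $\xi$ I invoke Proposition~\ref{prop:stab} to reduce the task to bounding $A_h(\xi,v)$ for arbitrary $v \in V_h$. Using the discrete equation (\ref{eq:fem}) and the continuous identity $a(u,v^l) = l(v^l)$, I add and subtract $a((\pi_h u^e)^l, v^l)$ and $l(v^l)$ to obtain the decomposition
\begin{equation*}
A_h(\xi,v) = \bigl[a_h(\pi_h u^e,v) - a((\pi_h u^e)^l,v^l)\bigr] + \bigl[l(v^l) - l_h(v)\bigr] + a((\pi_h u^e)^l - u, v^l) + j_h(\pi_h u^e, v).
\end{equation*}
The first two bracketed contributions, after dividing by $\tn v\tn_h$ and taking the supremum, are precisely the two sup-terms appearing in (\ref{eq:strang}), using the norm equivalence $\tn v^l\tn_h \sim \tn v\tn_h$ from (\ref{eq:normequ})--(\ref{eq:normequgrad}). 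It therefore remains to show that the two residual terms are bounded by $h^{3/2}\|u\|_{H^2(\Gamma)}\tn v\tn_h$.

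For $j_h(\pi_h u^e,v)$ I would use that $u^e$ is $C^1$ across mesh faces, so $[n_F\cdot\nabla u^e]=0$ and $[n_F\cdot\nabla \pi_h u^e] = [n_F\cdot\nabla(\pi_h u^e - u^e)]$. By Cauchy--Schwarz,
\begin{equation*}
|j_h(\pi_h u^e,v)| \lesssim \bigl(h^{1/2}\tn u^e - \pi_h u^e\tn_{\mcF_h}\bigr)\bigl(h^{1/2}\tn v\tn_{\mcF_h}\bigr),
\end{equation*}
so the interpolation estimate (\ref{eq:interpolface}) together with $h^{1/2}\tn v\tn_{\mcF_h} \leq \tn v\tn_h$ yields the desired $h^{3/2}$-bound. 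The main obstacle is the consistency term $a((\pi_h u^e)^l - u, v^l)$: a direct Cauchy--Schwarz on the convection part yields only $\|\nablas((\pi_h u^e)^l - u)\|_\Gamma\|v^l\|_\Gamma \lesssim h\|u\|_{H^2}\tn v\tn_h$, which is a half-order short. The cure is integration by parts over the closed surface $\Gamma$ to move the tangential derivative onto $v^l$:
\begin{equation*}
(\beta\cdot\nablas w,v^l)_\Gamma = -(w,(\divs\beta)v^l)_\Gamma - (w,\beta\cdot\nablas v^l)_\Gamma, \qquad w = (\pi_h u^e)^l - u.
\end{equation*}
Here the $L^2$-interpolation error $\|w\|_\Gamma \lesssim h^2\|u\|_{H^2(\Gamma)}$ combines with the streamline control $\|\beta\cdot\nablas v^l\|_\Gamma \lesssim h^{-1/2}\tn v\tn_h$ (extracted from the definition of $\tn v\tn_{\mcK_h}$ and the norm equivalences) to give the required $h^{3/2}$; the lower-order $\divs\beta$ and $\alpha$ contributions are bounded directly, using $\|\alpha\|_{L^\infty} + \|\divs\beta\|_{L^\infty}\lesssim 1$ and $\|v^l\|_\Gamma\lesssim \tn v\tn_h$. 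Collecting the bounds and dividing by $\tn v\tn_h$ gives (\ref{eq:strang}).
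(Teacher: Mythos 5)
Your proposal is correct and follows essentially the same route as the paper: interpolation splitting plus the inf--sup bound of Proposition~\ref{prop:stab}, the same four-term decomposition of $A_h(\pi_h u^e - u_h,v)$ with the two quadrature/geometric terms left as suprema, the jump term estimated via (\ref{eq:interpolface}), and integration by parts on $\Gamma$ to gain the extra half order in the consistency term $a((\pi_h u^e)^l-u,v^l)$. The only step you gloss is the bound $\|\beta\cdot\nablas v^l\|_\Gamma\lesssim h^{-1/2}\tn v\tn_h$: it does not follow from the norm equivalences alone, because $\tn v\tn_{\mcK_h}$ controls $\beta_h\cdot\nablash v$ rather than $(|B|B^{-1}\beta)\cdot\nablash v$, and the paper bounds the discrepancy $((|B|B^{-1}\beta-\beta_h)\cdot\nablash v)$ using the closeness assumption on $\beta_h$, an inverse estimate to pass to $\mcT_h$, and Lemma~\ref{lem:TechnicalA}.
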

\begin{proof} Adding and subtracting an interpolant $\pi_h u^e$, and then using the triangle inequality we obtain
\begin{align}
 \tn u^e - u_h \tn_h
 &\leq \tn u - \pi_h u^e \tn_h
+ \tn \pi_h u^e  - u_h \tn_h
\\
&\lesssim h^{3/2} \| u \|_{H^2(\Gamma)} 
+ \tn \pi_h u^e  - u_h \tn_h
\end{align}
where we used the interpolation estimate (\ref{eq:interpolenergysurf}) to estimate 
the first term. Proceeding with the second 
term we employ the inf-sup estimate in Proposition 
\ref{prop:stab} to get the bound 
\begin{equation}\label{eq:infsup}
\tn \pi_h u^e  - u_h \tn_h
\lesssim \sup_{v \in V_h \setminus \{0\}} 
\frac{A_h(\pi_h u  - u_h,v)}{\tn v \tn_h}
\end{equation}
Adding and subtracting the exact solution, and using 
Galerkin orthogonality (\ref{eq:fem}) the numerator may 
be written in the following form
\begin{align}
A_h( \pi_h u^e  - u_h,v) 
&= 
A_h( \pi_h u^e,v) -  l_h(v) 
\\
&=A_h( \pi_h u^e,v) - a((\pi_h u^e)^l , v^l) 
+ a((\pi_h u^e)^l - u,v^l) + l(v^l)- l_h(v)
\\
&= \underbrace{a_h( \pi_h u^e,v) - a((\pi_h u^e)^l , v^l)}_{I} 
+ \underbrace{j_h(\pi_h u^e,v)}_{II}
\\ \nonumber
&\qquad + \underbrace{a((\pi_h u^e)^l - u,v^l)}_{III} 
+ \underbrace{l(v^l)- l_h(v)}_{IV}
\\
&=I + II + III + IV
\end{align}
Here terms $I$ and $IV$ gives rise to the second and 
third terms on the right hand side in (\ref{eq:strang}) 
and $II$ and $III$ can be estimated as follows 
\begin{equation}\label{eq:proofstrangb}
|II| + |III| \lesssim h^{3/2} \| u \|_{H^2(\Gamma)} \tn v \tn_h 
\end{equation}
which together with (\ref{eq:infsup}) yields (\ref{eq:strang}). 
It remains to verify (\ref{eq:proofstrangb}).
\paragraph{Term $\bfI\bfI$.} This term is immediately estimated using (\ref{eq:interpolface}) as follows
\begin{align}
|II|&= |j_h(\pi_h u^e - u^e,v)| 
\\
&\lesssim h\tn \pi_h u^e - u^e \tn_{\mcF_h} \tn v \tn_{\mcF_h}
\\
&\lesssim h^{3/2}\| u \|_{H^2(\Gamma)} \tn v \tn_h
\end{align}

\paragraph{Term $\bfI \bfI \bfI$.} Using Green's formula, 
the Cauchy-Schwarz inequality, and the interpolation 
estimate (\ref{eq:interpolsurf}) we get
\begin{align}
a((\pi_h u^e)^l - u,v^l)
&=(\beta\cdot \nablas (\pi_h u^e)^l - u),v)_{\Gamma} 
+(\alpha (\pi_h u^e)^l - u),v)_{\Gamma} 
\\
&=-( (\pi_h u^e)^l - u,\beta \cdot \nablas v)_\Gamma 
+ ((\alpha - \divs \beta)(\pi_h u^e)^l - u),v)_\Gamma
\\
&\lesssim (1 + h^{-1})^{1/2} \| u - (\pi_h u^e)^l\|_\Gamma 
\Big(h \| \beta \cdot \nablas v \|^2_{\Gamma} + \| v \|^2_{\Gamma} \Big)^{1/2} 
\\
&\lesssim h^{-1/2} \| u - ( \pi_h u^e)^l\|_\Gamma \tn v \tn_h 
\\
&\lesssim h^{3/2} \| u \|_{H^2(\Gamma)} \tn v \tn_h
\end{align}
In the last step we used the estimate 
\begin{align}
\|\beta\cdot \nablas v \|^2_\Gamma 
&=\| \beta \cdot \nablas v \|^2_{\mcK_h^l}
\\
&=\|\,|B|\beta \cdot \nablash v \|^2_{\mcK_h}
\\
&\lesssim \|(|B|B^{-1}\beta - \beta_h) \cdot \nablash v \|^2_{\mcK_h}
+ \|\beta_h \cdot \nablash v \|^2_{\mcK_h}
\\
&\lesssim h^{-1}\||B|B^{-1}\beta - \beta_h\|^2_{L^\infty(\mcK_h)} 
\|\nabla v \|^2_{\mcT_h}
+ \|\beta_h \cdot \nablash v \|^2_{\mcK_h}
\\
&\lesssim h^{-3}\||B|B^{-1}\beta - \beta_h\|^2_{L^\infty(\mcK_h)}
\| v \|^2_{\mcT_h}
+ \|\beta_h \cdot \nablash v \|^2_{\mcK_h}
\\
&\lesssim h^{-2}\||B|B^{-1}\beta - \beta_h\|^2_{L^\infty(\mcK_h)}
\Big( \| v \|^2_{\mcK_h} + \tn v \tn^2_{\mcF_h} \Big)
+ \|\beta_h \cdot \nablash v \|^2_{\mcK_h}
\\
&\lesssim \tn v \tn^2_h
\end{align}
where we changed domain of integration from $\mcK_h^l$ to 
$\mcK_h$, added and subtracted $\beta_h$, used the inverse 
estimate (\ref{eq:inverseKtoT}) to pass from $\mcK_h$ to 
$\mcT_h$ in the second factor of the first term, employed 
Lemma \ref{lem:TechnicalA}, and finally the assumption that 
$\||B|B^{-1}\beta - \beta_h\|^2_{L^\infty(\mcK_h)}\lesssim h$, 
which follows from the stronger assumption (\ref{eq:bethedgeestassumption}).
\end{proof}

\subsection{Quadrature Errors}
\label{Sec:QuadErrors}
\paragraph{Definition of $\bfbeta_h$.} Changing domain of integration 
to $\Gammah$ and using the identity (\ref{eq:tanderlift}) 
for the tangential derivative of a lifted function we obtain
\begin{align}
(\beta\cdot\nablas v^l, w^l)_{\mcK_h^l} 
-(\beta_h\cdot\nablash v, w)_{\mcK_h}
&= (|B|(\beta\cdot\nablas v^l)^e - \beta_h \cdot(\nablash v), 
w)_{\mcK_h} 
\\
&= (|B|(\beta\cdot B^{-T}\nablash v)^e - \beta_h\cdot \nablash v, w)_{\mcK_h}
\\ \label{eq:quada}
&= ((|B|B^{-1}\beta - \beta_h) \cdot \nablash v, w)_{\mcK_h}
\end{align}
We note that $\beta \mapsto |B|B^{-1}\beta$ is in fact a Piola 
mapping which maps tangent vectors on $\Gamma$ onto tangent 
vectors on $\Gamma_h$. 

Since $(\nablash v) w$ is a linear function on each $K\in\mcK_h$ taking 
\begin{equation}\label{def:betah}
\beta_h = P_{1,K} (|B|B^{-1}\beta)
\end{equation}  
where $P_{1,K}$ is the $L^2$ projection onto $P_1(K)$, the space of linear polynomials on $K$, the quadrature error is actually zero.

In practice, we can not compute the exact $L^2$ projection instead we must use a quadrature rule. Starting from (\ref{eq:quada}) we 
get the estimate
\begin{align}
\nonumber
&|(\beta\cdot\nablas (\pi_h u^e)^l, w^l)_{\mcK_h^l} 
-(\beta_h\cdot\nablash (\pi_h u^e), w)_{\mcK_h}|
\\
&\qquad \lesssim \||B|B^{-1}\beta - \beta_h\|_{L^\infty(\mcK_h)}  
\|\nablash (\pi_h u^e) \|_{\mcK_h} \|w\|_{\mcK_h}
\\
&\qquad \lesssim \||B|B^{-1}\beta - \beta_h\|_{L^\infty(\mcK_h)}  
\| u \|_{H^1(\Gamma)} \|w\|_{\mcK_h}
\end{align}
Here we used $H^1$ stability of the interpolant and the 
following estimate
\begin{equation}
\|\nablash (\pi_h u^e) \|^2_{\mcK_h}
\lesssim 
h^{-1}\|\nabla (\pi_h u^e) \|^2_{\mcT_h}
\lesssim 
h^{-1}\|\nabla u^e \|^2_{\mcT_h}
\lesssim
h^{-1}\|\nablas u\|^2_{U_{\delta}(\Gamma)}
\lesssim 
\|\nablas u\|^2_{\Gamma}
\end{equation}
where $\mcT_h \subset U_{\delta}(\Gamma)$ with $\delta\sim h$.

\begin{rem} Note that the fact that $\pi_h u^e$ appears in the first slot in the bilinear form is crucial since we get $L^2$ control over the the full tangent gradient 
$\nablas (\pi_h u^e)^l$ using stability of the interpolant, 
while the corresponding control of the discrete solution 
$u_h$ provided by Proposition \ref{prop:stabB} is only 
$h^{3/4} \| \nablas u_h\|_{\mcK_h} \lesssim \tn u_h \tn_h 
\lesssim \sup_{v\in V_h} \frac{A_h(u_h,v)}{\tn v \tn_h} 
\lesssim \|f_h \|_{\mcK_h}$ indicating a higher demand on 
the accuracy of $\beta_h$ in order to achieve optimal order 
of convergence.
\end{rem}

\paragraph{Definition of $\boldsymbol{\alpha}_{\boldsymbol{h}}$ and $\boldsymbol{f}_{\boldsymbol{h}}$.}
Similarly for the approximation of $\alpha$ and $f$ we let $\alpha_h$ and $f_h$ be linear approximations of $\alpha$ and $f$ on each element $K$. For  
$(\alpha_h u,v)_K$ a quadrature rule that is exact for cubic polynomials is used while for $(f_h,v)_K$ a quadratic rule is enough. We have the estimates
\begin{multline}
|(\alpha (\pi_h u^e)^l,v^l)_{\mcK_h^l} - (\alpha_h (\pi_h u^e),v)_{\mcK_h^l}|
=|((|B|\alpha^e -\alpha_h) (\pi_h u^e),v)_{\mcK_h}|
\\
\lesssim h^2 \| \pi_h u^e\|_{\mcK_h} \| v \|_{\mcK_h}
\lesssim h^2 \| u \|_{\Gamma} \tn v \tn_h
\end{multline}
and
\begin{equation}
|(f,v^l)_{\mcK_h^l} 
- (f_h,v^l)_{\mcK_h}| 
= |(|B|f^e - f_h,v)_{\mcK_h}|
\lesssim h^2 \| v \|_{\mcK_h}
\lesssim h^2 \tn v \tn_h
\end{equation}
 
We summarize our results in the following Lemma. 
 
\begin{lem}\label{lem:quadratureestimates} Let $\beta_h$ be an elementwise quadratic polynomial approximation of $|B|B^{-1}\beta$,  
$\alpha_h$ and $f_h$ elementwise linear approximations of $\alpha$ and $f$. Assuming that 
\begin{equation}\label{eq:assumptioncoefficients}
\||B| B^{-1} \beta -\beta_h \|_{L^\infty(\mcK_h)} \lesssim h^2, 
\quad 
\| |B| \alpha - \alpha_h \|_{^\infty(\mcK_h)} \lesssim h^2,
\quad 
\| |B| f - f_h \|_{L^\infty(\mcK_h)} \lesssim h^2
\end{equation}
then 
\begin{align}
|a((\pi_h u^e)^l,v^l) - a_h((\pi_h u^e),v)| 
&\lesssim h^{2}\|u\|_{H^1(\Gamma)} \tn w \tn_h
\quad \forall v \in V_h
\\
|l(v^l) - l_h(v)| &\lesssim h^2 \| v \|_\Gammah 
\quad \forall v \in V_h
\end{align}
\end{lem}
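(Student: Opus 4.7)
The plan is to assemble the three quadrature estimates that have already been written out in the discussion preceding the lemma, combined now with the strengthened coefficient assumption (\ref{eq:assumptioncoefficients}). I would split the bilinear form error as
\begin{equation*}
a((\pi_h u^e)^l, v^l) - a_h(\pi_h u^e, v) = T_\beta + T_\alpha,
\end{equation*}
where $T_\beta$ collects the convection contribution and $T_\alpha$ the reaction contribution.

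For $T_\beta$ I would reuse the identity (\ref{eq:quada}) to factor out the coefficient error $|B|B^{-1}\beta - \beta_h$, apply H\"older's inequality on $\mcK_h$, invoke the $H^1$-stability bound $\|\nablash (\pi_h u^e)\|_{\mcK_h} \lesssim \|u\|_{H^1(\Gamma)}$ displayed just above the lemma, and use the new hypothesis $\||B|B^{-1}\beta - \beta_h\|_{L^\infty(\mcK_h)} \lesssim h^2$ to conclude $|T_\beta| \lesssim h^2 \|u\|_{H^1(\Gamma)} \|v\|_{\mcK_h}$. For $T_\alpha$ I would change the domain of integration to $\mcK_h$, factor out $|B|\alpha^e - \alpha_h$, and apply H\"older together with the $L^2$-stability (\ref{eq:pihl2stab}) of $\pi_h$ and the corresponding hypothesis on $\alpha_h$ to get $|T_\alpha| \lesssim h^2 \|u\|_\Gamma \|v\|_{\mcK_h}$.

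Summing the two contributions and using the immediate bound $\|v\|_{\mcK_h} \leq \tn v \tn_h$ from the norm definitions (\ref{eq:normtrippleKh})--(\ref{eq:normtrippleh}) yields the first claimed estimate. The second follows from the same recipe with $f,f_h$ in place of $\alpha,\alpha_h$: rewrite $l(v^l) - l_h(v) = (|B|f^e - f_h, v)_{\mcK_h}$, apply H\"older, and invoke the third assumption in (\ref{eq:assumptioncoefficients}); since no interpolant enters, this part is purely algebraic.

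There is essentially no hard step: the three constituent bounds are already present in the paragraphs leading up to the lemma. The only genuinely substantive point to flag is why the coefficient error for $\beta_h$ must be of order $h^2$ rather than $h$. This is exactly the observation in the remark following (\ref{def:betah}): because $\pi_h u^e$ enjoys full $H^1$-stability, the extra power of $h$ is absorbed into the coefficient error rather than having to be paid for by the weaker gradient control $h^{3/4}\|\nablash u_h\|_{\mcK_h} \lesssim \tn u_h \tn_h$ that Proposition \ref{prop:stabB} provides for a generic discrete function.
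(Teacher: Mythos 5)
Your proposal is correct and follows essentially the same route as the paper: the paper's ``proof'' of this lemma is precisely the preceding discussion in Section~\ref{Sec:QuadErrors}, namely the identity (\ref{eq:quada}) with H\"older and the $H^1$-stability bound $\|\nablash(\pi_h u^e)\|_{\mcK_h}\lesssim \|u\|_{H^1(\Gamma)}$ for the convection term, and the analogous factorizations $(|B|\alpha^e-\alpha_h)$ and $(|B|f^e-f_h)$ for the reaction and load terms, summed and closed with $\|v\|_{\mcK_h}\leq \tn v\tn_h$. Your closing remark about why $\beta_h$ needs $O(h^2)$ accuracy also matches the paper's own observation.
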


\begin{rem} Note that we do not have to take $|B|$ 
into account in (\ref{eq:assumptioncoefficients}) since 
$|\,|B|-1\,| \sim h^2$. Thus we could instead use the 
simplified assumptions 
\begin{equation}\label{eq:assumptioncoefficientssimple}
\|B^{-1} \beta -\beta_h \|_{L^\infty(\mcK_h)} \lesssim h^2, 
\quad 
\| \alpha - \alpha_h \|_{^\infty(\mcK_h)} \lesssim h^2,
\quad 
\| f - f_h \|_{L^\infty(\mcK_h)} \lesssim h^2
\end{equation}
Furthermore, $B^{-1} = P_\Gammah + O(h^2)$, and thus we 
may simplify the assumption on $\beta_h$ even further 
and assume that 
\begin{equation}
\|P_\Gammah \beta^e -\beta_h \|_{L^\infty(\mcK_h)} \lesssim h^2
\end{equation}
\end{rem}

We finally verify that $\beta_h$ satisfies the assumption (\ref{eq:bethedgeestassumption}).

\begin{lem}\label{lem:betahedgeest} Let $\beta_h$ be 
as in Lemma \ref{lem:quadratureestimates}. Then it 
holds
%
\begin{equation}
\|[n_E\cdot\beta_h]\|_{L^\infty(\mcE_h)} \lesssim h^2
\end{equation}
\end{lem}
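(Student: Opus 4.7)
The plan is to split the jump as $[n_E\cdot\beta_h] = [n_E\cdot(\beta_h-\beta^\ast)] + [n_E\cdot\beta^\ast]$ with $\beta^\ast := |B|B^{-1}\beta^e$, treat the first piece directly using the elementwise quadrature assumption, and reduce the second piece to a purely geometric estimate. Since $|n_E|=1$ on each side and $\|\beta_h - \beta^\ast\|_{L^\infty(\mcK_h)}\lesssim h^2$ by the hypothesis of Lemma \ref{lem:quadratureestimates}, the first piece is bounded by $2\|\beta_h-\beta^\ast\|_{L^\infty(\mcK_h)}\lesssim h^2$.

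For the second piece I would first simplify $\beta^\ast$. Since $P_\Gamma\beta^e=\beta^e$ (tangency on $\Gamma$), $|B|=1+O(h^2)$, and $(I-\rho\mcH)^{-1}=I+O(\rho)=I+O(h^2)$ uniformly on $\Gamma_h$ by \eqref{BBTbound} and \eqref{detBbound} together with the geometric approximation \eqref{assum:geom}, I can write $\beta^\ast = P_{\Gamma_h}\beta^e + O(h^2)$ pointwise on each $K\in\mcK_h$. On $K_i$ the conormal $n_{E,i}$ lies in the tangent plane $T_x(K_i)$, hence $n_{E,i}\cdot n_{h,i}=0$, so $n_{E,i}\cdot P_{\Gamma_h,i}\beta^e = n_{E,i}\cdot\beta^e$. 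Since $\beta^e$ is single-valued on $E$, the second piece reduces to $(n_{E,1}+n_{E,2})\cdot\beta^e(x)$ (with the usual outward-conormal orientation), up to $O(h^2)$.

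The geometric step, which I expect to be the main obstacle, is to show $|(n_{E,1}+n_{E,2})\cdot\beta^e|\lesssim h^2$. Let $t_E$ be the unit edge tangent shared by $K_1,K_2$; choosing signs so that $n_{E,i}$ points outward from $K_i$, one has $n_{E,1}+n_{E,2} = t_E\times(n_{h,1}-n_{h,2}) =: t_E\times\delta n$. The key ingredients are: (a) $\delta n\cdot t_E=0$, since $t_E\cdot n_{h,i}=0$ for $i=1,2$; (b) $\delta n\cdot(n_{h,1}+n_{h,2})=0$, since $n_{h,1},n_{h,2}$ are unit vectors; (c) $\|n-n_{h,i}\|_{L^\infty}\lesssim h$ by \eqref{assum:geom}, which combined with (b) yields $|\delta n\cdot n|\lesssim h^2$; and (d) $\beta^e\cdot n=0$ since $\beta$ is tangential to $\Gamma$. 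Expanding $\delta n$ in the local frame $\{t_E,n,t_E\times n\}$ gives $\delta n = a(t_E\times n) + b n$ with $|a|\lesssim h$ and $|b|\lesssim h^2$, and then
\begin{equation*}
t_E\times\delta n = a\bigl((t_E\cdot n)t_E - n\bigr) + b(t_E\times n).
\end{equation*}
Dotting with $\beta^e$, the $-an$-term vanishes by (d), the $b$-term is $O(h^2)$, and the remaining term is $a(t_E\cdot n)(t_E\cdot\beta^e)=O(h)\cdot O(h)\cdot O(1) = O(h^2)$, where $t_E\cdot n = t_E\cdot(n-n_{h,1}) = O(h)$.

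Combining these two pieces yields $\|[n_E\cdot\beta_h]\|_{L^\infty(\mcE_h)}\lesssim h^2$ uniformly in $E\in\mcE_h$, which is the claimed estimate. The routine work is just the expansion in the local frame; the conceptual point is that the only reason the jump is not $O(1)$ is the cancellation coming from $\beta$ being tangent to $\Gamma$ combined with the second-order cancellation in $\delta n\cdot n$ forced by $n_{h,i}$ being unit vectors close to $n$.
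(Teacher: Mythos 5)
Your proof is correct and follows essentially the same route as the paper: split off the smooth tangent field (you subtract $|B|B^{-1}\beta^e$, the paper subtracts $\beta$ and absorbs the $|B|B^{-1}$ discrepancy into the first piece), bound that difference by the quadrature assumption and the bounds on $B$, and reduce the remaining conormal jump of a single-valued tangent field to the geometric cancellation $|(n_{h,1}-n_{h,2})\cdot n|\lesssim h^2$. Your explicit frame expansion of $n_{h,1}-n_{h,2}$ is just a more detailed rendering of the paper's cross-product identity $([n_K]\times e_E)\cdot(n\times e_E)=[n_K]\cdot n$, so the two arguments coincide in substance.
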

\begin{proof} Splitting the error by adding and subtracting 
$\beta$ we get
\begin{align}
\|[n_E \cdot \beta_h]\|_{L^\infty(E)} 
&\leq 
\|(n_E \cdot (\beta_h-\beta))^+ \|_{L^\infty(E)}
\\ \nonumber
&\qquad + 
 \|(n_E \cdot (\beta_h-\beta))^- \|_{L^\infty(E)}
+ 
\|[n_E]\cdot \beta\|_{L^\infty(E)}
\\
&=I + II + III
\end{align}
Here the last term is $O(h^2)$ and it remains to estimate 
the first two terms which are of the same form. 
\paragraph{Terms $\bfI$ and $\bfI\bfI$.}
Using the definition of $\beta_h$ we have 
\begin{align}
|n_E \cdot (\beta_h-\beta) \|_{L^\infty(E)}
&\leq
\|n_E \cdot (\beta_h - |B|B^{-1}\beta) \|_{L^\infty(E)}
+ \|n_E \cdot (|B|B^{-1}\beta - \beta) \|_{L^\infty(E)}
\\
&\lesssim h^3 + h^2
\end{align}
Here the second term was estimated as follows 
\begin{align}
&\|n_E \cdot (|B|B^{-1}\beta - \beta) \|_{L^\infty(E)}
\nonumber
\\
&\qquad = \|n_E \cdot (|B|B^{-1}\beta - P_{\Gammah} \beta) \|_{L^\infty(E)}
\\
&\qquad \lesssim 
\|n_E \cdot B^{-1} ((|B| - 1) \beta\|_{L^\infty(E)} 
+ \|n_E \cdot B^{-1}(\Ps - B P_{\Gammah})\beta  \|_{L^\infty(E)}
\\
&\qquad \lesssim h^2
\end{align}
where we used the bounds (\ref{BBTbound}) for $B$. 

\paragraph{Term $\bfI\bfI\bfI$.}
Since $\beta$ is a smooth tangent vector field on 
$\Gamma$ we have 
\begin{align}
[n_E \cdot \beta ] &= [n_E] \cdot \beta = [n_E] \cdot t^e_E |t_E^e \cdot\beta^e| 
\end{align}
where $t_E \in T_{p(x)}(\Gamma)$ is the unit tangent vector 
at $p(x)$ to the exact surface that is orthogonal to edge $E$. Thus it remains to estimate $[n_E] \cdot t^e_E$. We have the 
identity 
\begin{equation}
[n_E] \cdot t^e_E 
= 
([n_K] \times e_E) \cdot (n \times e_E)
=[n_K]\cdot n \sim h^2
\end{equation}
\end{proof}

\subsection{Error Estimates}
\begin{thm} Let $u$ be the solution to  \eqref{eq:conva} and $u_h$ 
the finite element approximation defined by \eqref{eq:fem}, then the following estimate holds 
\begin{equation}
\tn u^e - u_h \tn_h \lesssim h^{3/2}
\end{equation}
for all $h\in (0,h_0]$ with $h_0$ small enough.
\end{thm}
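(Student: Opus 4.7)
The plan is to simply combine Strang's Lemma (Lemma \ref{lem:strang}) with the quadrature consistency estimates of Lemma \ref{lem:quadratureestimates}. Strang's Lemma already provides a decomposition of $\tn u^e - u_h \tn_h$ into three contributions: an interpolation error term of size $h^{3/2}\|u\|_{H^2(\Gamma)}$, a supremum measuring the consistency error between $a$ and $a_h$ acting on $\pi_h u^e$, and a supremum measuring the consistency error between $l$ and $l_h$. Since the first term is already at the target order, the task reduces to showing that the two consistency suprema are no worse than $O(h^{3/2})$.

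For the bilinear form supremum, I apply Lemma \ref{lem:quadratureestimates} directly, which gives
\begin{equation*}
|a((\pi_h u^e)^l, v^l) - a_h(\pi_h u^e, v)| \lesssim h^2 \|u\|_{H^1(\Gamma)} \tn v \tn_h,
\end{equation*}
so dividing by $\tn v \tn_h$ and taking the supremum produces an $O(h^2)$ contribution, which is higher order than $h^{3/2}$.

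For the load supremum, Lemma \ref{lem:quadratureestimates} yields $|l(v^l) - l_h(v)| \lesssim h^2 \|v\|_{\Gamma_h}$. To convert to the denominator appearing in Strang's estimate, I use the norm equivalence $\|v^l\|_\Gamma \sim \|v\|_{\Gamma_h}$ (from \eqref{eq:normequ}) together with the elementary bound $\|v\|_{\Gamma_h} = \|v\|_{\mcK_h} \leq \tn v \tn_{\mcK_h} \lesssim \tn v \tn_h$, which follows from the definitions \eqref{eq:normtrippleh}--\eqref{eq:normtrippleKh}. This gives $O(h^2)$ for the third term as well.

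There is no significant obstacle left in this final step: all the hard work has already been done, namely the inf-sup stability in Proposition \ref{prop:stab} that underlies Strang's Lemma, the careful localization arguments in the technical lemmas of Section 5.1, and the construction of $\beta_h$, $\alpha_h$, $f_h$ with $O(h^2)$ accuracy so that the quadrature errors do not pollute the $h^{3/2}$ rate. Assembling the three bounds yields $\tn u^e - u_h \tn_h \lesssim h^{3/2}(\|u\|_{H^2(\Gamma)} + 1)$, with the leading rate coming entirely from interpolation.
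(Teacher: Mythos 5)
Your proposal is correct and follows essentially the same route as the paper: the paper's proof likewise combines the Strang Lemma \ref{lem:strang} with the quadrature error estimates of Lemma \ref{lem:quadratureestimates} (after an add-and-subtract of the interpolant that is already built into the Strang estimate), with the $h^{3/2}$ rate coming from the interpolation term and the consistency suprema contributing only $O(h^2)$.
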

\begin{proof} Adding and subtracting an interpolant
\begin{equation}
\tn u^e - u_h \tn_h \leq \tn u^e - \pi_h u^e \tn_h + \tn \pi_h u^e - u_h \tn_h
\end{equation}
Here the first term is estimated using the interpolation error 
estimate (\ref{eq:interpolenergy}) and for the second we apply the 
Strang Lemma \ref{lem:strang} together with the quadrature error estimates in Lemma \ref{lem:quadratureestimates}.
\end{proof}

\begin{thm}\label{thm:errorest} Let $u$ be the solution to  \eqref{eq:conva} and $u_h$ the finite element approximation 
defined by \eqref{eq:fem}, then the following estimate holds 
\begin{equation}
\| \nablash(u^e - u_h) \|_{\mcK_h} \lesssim h^{3/4} 
\end{equation}
for all $h\in (0,h_0]$ with $h_0$ small enough.
\end{thm}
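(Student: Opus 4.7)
The plan is to decompose the error via the interpolant $\pi_h u^e$ and then treat the interpolation error and the discrete error separately, noting that the former is in fact of order $h$ on the tangential gradient, while the latter is only of order $h^{3/4}$ because we must pay the $h^{-3/4}$ price of Proposition \ref{prop:stabB}.

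First I would write
\begin{equation*}
\|\nablash(u^e - u_h)\|_{\mcK_h}
\leq \|\nablash(u^e - \pi_h u^e)\|_{\mcK_h} + \|\nablash(\pi_h u^e - u_h)\|_{\mcK_h}.
\end{equation*}
For the first (smooth) term I would invoke the surface interpolation estimate (\ref{eq:interpolsurf}) with $m=1$, $s=2$, which gives
\begin{equation*}
\|\nablash(u^e - \pi_h u^e)\|_{\mcK_h} \lesssim h \|u\|_{H^2(\Gamma)},
\end{equation*}
i.e.\ a contribution that is already better than $h^{3/4}$.

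For the second (discrete) term, the point is that $\pi_h u^e - u_h \in V_h$, so Proposition \ref{prop:stabB} applies and yields
\begin{equation*}
h^{3/4}\|\nablash(\pi_h u^e - u_h)\|_{\mcK_h} \lesssim \tn \pi_h u^e - u_h \tn_h.
\end{equation*}
The right hand side is then controlled by the triangle inequality together with the previous theorem and the interpolation estimate (\ref{eq:interpolenergy}):
\begin{equation*}
\tn \pi_h u^e - u_h \tn_h \leq \tn \pi_h u^e - u^e \tn_h + \tn u^e - u_h \tn_h \lesssim h^{3/2}\|u\|_{H^2(\Gamma)}.
\end{equation*}
Dividing through by $h^{3/4}$ gives $\|\nablash(\pi_h u^e - u_h)\|_{\mcK_h} \lesssim h^{3/4}\|u\|_{H^2(\Gamma)}$, which combined with the interpolation bound above yields the claim.

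There is no real obstacle in this argument; the only subtle point is recognizing that one must not attempt to bound $\|\nablash(u^e-u_h)\|_{\mcK_h}$ directly from the energy norm via Proposition \ref{prop:stabB} (which requires a discrete function), but instead pull out the smooth part of the error first, where a sharper interpolation bound is available. This separation is essential because Proposition \ref{prop:stabB} is lossy by a factor $h^{-3/4}$, so feeding it an $O(h^{3/2})$ bound is exactly what produces the final $h^{3/4}$ rate.
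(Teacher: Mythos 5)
Your proposal is correct and follows essentially the same route as the paper: split off the interpolant, bound $\|\nablash(u^e-\pi_h u^e)\|_{\mcK_h}\lesssim h$ by (\ref{eq:interpolsurf}), apply Proposition \ref{prop:stabB} to the discrete part, and control $\tn \pi_h u^e - u_h\tn_h \lesssim h^{3/2}$ via the triangle inequality, (\ref{eq:interpolenergy}), and the energy-norm error estimate. No discrepancies to report.
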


\begin{proof} We have the estimates
\begin{align}
\| \nablash (u^e - u_h) \|_{\mcK_h} 
&\lesssim \| \nablash ( u^e - \pi_h u^e)  \|_{\mcK_h}
+ \| \nablash (\pi_h u^e - u_h) \|_{\mcK_h}
\\
&\lesssim \| \nablash ( u^e - \pi_h u^e)  \|_{\mcK_h} 
+ h^{-3/4} \tn \pi_h u^e - u_h \tn_h
\\ 
&\lesssim \| \nablash ( u^e - \pi_h u^e)  \|_{\mcK_h}
+ h^{-3/4} \tn \pi_h u^e - u^e \tn_h 
\\ \nonumber
&\qquad
+ h^{-3/4} \tn u^e - u_h \tn_h
\\
&\lesssim h + h^{-3/4} h^{3/2} + h^{-3/4} h^{3/2} 
\\
&\lesssim h^{3/4}
\end{align}
Here we added and subtracted an interpolant and used the 
triangle inequality, used the stability estimate in Proposition 
\ref{prop:stabB}, added and subtracted an interpolant in 
the second term and used the triangle inequality, 
used interpolation error estimates (\ref{eq:interpolsurf}) 
and  (\ref{eq:interpolenergy}) to estimate the first and the 
second term and finally Theorem \ref{thm:errorest} to estimate 
the third term, which conclude the proof of the desired estimate.
\end{proof}

\begin{rem} We note that these two error estimates are completely 
analogous to the estimates obtained in \cite{BuHa04} for the corresponding method on standard triangular meshes in the plane. 
\end{rem}

\section{Condition Number Estimate}
Let $\{\varphi_i\}_{i=1}^N$ be the standard piecewise linear 
basis functions associated with the nodes in $\mcT_h$ and 
let $\mcA$ be the stiffness matrix with elements 
$a_{ij} = A_h(\varphi_i,\varphi_j)$. 
We recall that the condition number is defined by
\begin{equation}\label{cond_def}
\kappa_h(\mcA) := | \mcA |_{\IR^N} |\mcA^{-1} |_{\IR^N}
\end{equation}
Using the ideas introduced in \cite{BuHaLa14}, we may prove 
the following bound on the condition number of the matrix.

\begin{thm} The condition number of the stiffness matrix $\mcA$
satisfies the estimate
\begin{equation}
\kappa_h(\mcA)\lesssim h^{-2}
\end{equation}
for all $h \in (0,h_0]$ with $h_0$ small enough.
\end{thm}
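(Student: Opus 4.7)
The plan is to follow the standard cutFEM condition number argument from \cite{BuHaLa14}, adapted to the new inf-sup estimate of Proposition \ref{prop:stab}. We use the identification $V_h \ni v \leftrightarrow \mathbf{v} = (v_i) \in \IR^N$ via $v = \sum_i v_i \varphi_i$ and we will compare the Euclidean norm $|\mathbf{v}|_{\IR^N}$ with the two relevant norms on $V_h$: the volumetric $L^2$ norm $\|v\|_{\mcT_h}$ and the energy norm $\tn v\tn_h$. The key identity is $\langle \mcA\mathbf{v},\mathbf{w}\rangle = A_h(v,w)$ for all $v,w \in V_h$.

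The first ingredient is the finite element nodal isomorphism
\begin{equation*}
h^{3/2} |\mathbf{v}|_{\IR^N} \lesssim \|v\|_{\mcT_h} \lesssim h^{3/2} |\mathbf{v}|_{\IR^N}, \qquad \forall v \in V_h,
\end{equation*}
which is standard since $\mcT_h$ consists of shape-regular tetrahedra and each node carries a uniformly bounded number of incident elements. Combined with Lemma \ref{lem:TechnicalA} (applied with the crude bound $\|v\|_{\mcK_h}^2 + \tn v\tn_{\mcF_h}^2 \lesssim h^{-1}\tn v\tn_h^2$), this yields the comparison
\begin{equation*}
h^{3/2}|\mathbf{v}|_{\IR^N} \lesssim \|v\|_{\mcT_h} \lesssim \tn v \tn_h, \qquad \text{hence} \qquad |\mathbf{v}|_{\IR^N}\lesssim h^{-3/2}\tn v\tn_h.
\end{equation*}

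Next I bound $|\mcA|_{\IR^N}$ from above. For $v,w \in V_h$ I estimate each piece of $A_h(v,w)$ by a Cauchy–Schwarz on $\mcK_h$ (resp.\ $\mcF_h$) and then use the trace inequality from $\mcK_h$ (resp.\ $\mcF_h$) to $\mcT_h$ together with an inverse estimate for the gradient, producing
\begin{equation*}
\|v\|_{\mcK_h} \lesssim h^{1/2}|\mathbf{v}|_{\IR^N}\cdot h^{1/2} = h|\mathbf{v}|_{\IR^N}, \quad \|\nablash v\|_{\mcK_h}\lesssim |\mathbf{v}|_{\IR^N}, \quad \|[n_F\cdot\nabla v]\|_{\mcF_h}\lesssim |\mathbf{v}|_{\IR^N}.
\end{equation*}
Collecting terms, $|A_h(v,w)| \lesssim h\,|\mathbf{v}|_{\IR^N}|\mathbf{w}|_{\IR^N}$, so that $|\mcA|_{\IR^N}\lesssim h$.

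The bound on $|\mcA^{-1}|_{\IR^N}$ is the main point and it is where the stabilization enters decisively. Given $\mathbf{b} = \mcA\mathbf{v}$, I apply the inf-sup estimate of Proposition \ref{prop:stab} followed by Cauchy–Schwarz in $\IR^N$:
\begin{equation*}
\tn v \tn_h \lesssim \sup_{w\in V_h\setminus\{0\}} \frac{A_h(v,w)}{\tn w\tn_h} = \sup_{\mathbf{w}\neq 0}\frac{\mathbf{b}^T \mathbf{w}}{\tn w\tn_h} \lesssim |\mathbf{b}|_{\IR^N}\, \sup_{\mathbf{w}\neq 0}\frac{|\mathbf{w}|_{\IR^N}}{\tn w\tn_h}.
\end{equation*}
The supremum is controlled by the comparison above, which gives $|\mathbf{w}|_{\IR^N}\lesssim h^{-3/2}\tn w\tn_h$. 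Hence $\tn v\tn_h \lesssim h^{-3/2}|\mathbf{b}|_{\IR^N}$, and another use of the nodal isomorphism yields
\begin{equation*}
|\mathbf{v}|_{\IR^N} \lesssim h^{-3/2}\tn v\tn_h \lesssim h^{-3}|\mathbf{b}|_{\IR^N},
\end{equation*}
so that $|\mcA^{-1}|_{\IR^N}\lesssim h^{-3}$. Combining,
\begin{equation*}
\kappa_h(\mcA) = |\mcA|_{\IR^N}|\mcA^{-1}|_{\IR^N}\lesssim h \cdot h^{-3} = h^{-2}.
\end{equation*}

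The main obstacle in this scheme is not the algebraic chaining itself but ensuring that the step $|\mathbf{v}|_{\IR^N} \lesssim h^{-3/2}\tn v\tn_h$ actually holds in the cut setting: without the ghost-penalty / gradient-jump stabilization, a basis function whose support tetrahedron has vanishingly small intersection with $\Gamma_h$ could have large $|\mathbf{v}|_{\IR^N}$ while $\tn v\tn_h$ remains arbitrarily small, spoiling the argument. Lemma \ref{lem:TechnicalA} is precisely what rules this out, by transferring control from the surface pieces and the face jumps to the full volumetric $L^2$ norm on $\mcT_h$, and thus to $|\mathbf{v}|_{\IR^N}$. Once that transfer is available, the rest of the argument is the standard inf-sup/nodal-scaling combination.
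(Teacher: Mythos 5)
Your proposal is correct and follows essentially the same route as the paper: the nodal isomorphism $\|v\|_{\mcT_h}\sim h^{3/2}|\mathbf{v}|_{\IR^N}$, the continuity bound $|\mcA|_{\IR^N}\lesssim h$ via trace and inverse inequalities, and the bound $|\mcA^{-1}|_{\IR^N}\lesssim h^{-3}$ via Proposition \ref{prop:stab} combined with Lemma \ref{lem:TechnicalA} (the paper writes these exponents as $h^{d-2}$ and $h^{-d}$ with $d=3$). Your closing observation that Lemma \ref{lem:TechnicalA} is what makes the comparison $|\mathbf{v}|_{\IR^N}\lesssim h^{-3/2}\tn v\tn_h$ survive small cuts is exactly the role it plays in the paper's argument.
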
 
\begin{proof}
First we note that if $v = \sum_{i=1}^N V_{i} \varphi_i$ 
and $\{\varphi_i\}_{i=1}^N$ is the usual nodal basis on 
$\mcT_h$ then the following well known estimates hold
\begin{equation}\label{rneqv}
c h^{-d/2} \| v \|_{\mcT_h} \leq | V |_{\IR^N} \leq C h^{-d/2}\| v \|_{\mcT_h}
\end{equation}
It follows from the definition (\ref{cond_def}) of the condition 
number that we need to estimate $| \mcA |_{\IR^N}$ and 
$|\mcA^{-1}|_{\IR^N}$. 

\paragraph{Estimate of $| \mcA |_{\IR^N}$.} We have
\begin{align}
|\mcA V|_{\IR^N} &= \sup_{W \in \IR^N \setminus 0} \frac{(W,\mcA V)_{\IR^N}}{| W |_{\IR^N}}
\\
&= \sup_{w \in V_h \setminus 0 }  \frac{A_h(v,w)}{| W |_{\IR^N}}
\\
&\lesssim h^{d- 2}| V|_{\IR^N}
\end{align}
Here we used the following continuity of $A_h(\cdot,\cdot)$
\begin{align}
A_h(v,w) &\lesssim 
\| \beta_h \cdot \nabla v \|_{\mcK_h} \| w \|_{\mcK_h} 
+ h \tn v \tn_{\mcF_h} \tn w \tn_{\mcF_h}
\\
&\lesssim 
\Big( h \| \beta_h \cdot \nabla v \|_{\mcK_h}^2 
+ h \tn v \tn_{\mcF_h}^2\Big)^{1/2}
\Big(h^{-1}\| w \|^2_{\mcK_h} 
+ h \tn w \tn_{\mcF_h}^2\Big)^{1/2}
\\
&\lesssim h^{d-2} |V|_{\IR^N} | W |_{\IR^N}
\end{align}
In the last step we used the estimates
\begin{align}
h \| \beta_h \cdot \nabla v \|_{\mcK_h}^2 
+ h \tn v \tn_{\mcF_h}^2 
&\lesssim 
\| \beta_h \cdot \nabla v \|_{\mcT_h}^2 
+ \| \nabla v \|_{\mcT_h}^2
\lesssim 
h^{-2} \| v \|_{\mcT_h}^2 
\lesssim 
h^{d-2}|V|_{\IR^N}^2
\end{align}
where we used the inverse estimates (\ref{eq:inverseKtoT}) 
and (\ref{eq:inverseFtoT}) to pass from $\mcK_h$ and $\mcF_h$ 
to $\mcT_h$, an inverse estimate to remove the gradient, 
and finally the equivalence (\ref{rneqv}); and 
\begin{equation}
h^{-1}\| w \|^2_{\mcK_h} 
+ h \tn w \tn_{\mcF_h}^2
\lesssim 
h^{-2} \|w\|^2_{\mcT_h} + \| \nabla w \|_{\mcT_h}^2
\lesssim 
h^{-2} \|w\|^2_{\mcT_h}
\lesssim 
h^{d-2} |W|^2_{\IR^N}
\end{equation}
where we used the same sequence of estimates.
It follows that  
\begin{equation}\label{Abound}
| \mcA |_{\IR^N} \lesssim h^{d-2}
\end{equation}

\paragraph{Estimate of $|\mcA^{-1} |_{\IR^N}$.} We note that 
using \eqref{rneqv} and Lemma \ref{lem:TechnicalA} we have 
\begin{equation}\label{eq:condtechnicala}
 h^d | V |^2_{\IR^N}  
 \lesssim \|v\|^2_{\mcT_h} 
 \lesssim h \Big( \|v\|_{\mcK_h}^2  + \tn v \tn_{\mcF_h}^2 \Big) 
 \lesssim  \tn v \tn_h^2
\end{equation}
where in the last step we used the fact that $h \in (0,h_0]$ and 
the definition (\ref{eq:normtrippleh}) of $\tn \cdot \tn_h$. Starting 
from \eqref{eq:condtechnicala} and using the inf-sup condition 
(\ref{eq:stabest})  we obtain 
\begin{align}
 | V |_{\IR^N} 
 \lesssim h^{-d/2} \tn
 v \tn_h 
&\lesssim h^{-d/2} \sup_{w \in V_h \setminus \{0\} }  \frac{A_h(v,w)}{\tn w
  \tn_h} 
\\  
&\qquad  
\lesssim \sup_{W \in \IR^N\setminus \{0\}}
 h^{-d/2} \frac{ | \mcA V|_{\IR^N}  |W|_{\IR^N}
}{h^{{d}/{2}} |W|_{\IR^N}} \lesssim h^{-d}  | \mcA V|_{\IR^N} 
\end{align}
Here we used (\ref{eq:condtechnicala}), 
$h^{d/2} |W|_{\IR^N} \lesssim \tn w \tn_h$,
to replace $\tn w \tn_h$ by $h^{d/2} |W|_{\IR^N} $  in the denominator.
Setting $V= \mcA^{-1} X$, $X \in \IR^N$, we obtain
\begin{equation}\label{Ainvbound}
| \mcA^{-1}  |_{\IR^N} \lesssim h^{-d}  
\end{equation}

\paragraph{Conclusion.} The claim follows by using the bounds \eqref{Abound} and \eqref{Ainvbound} in the definition \eqref{cond_def}.
\end{proof}
\section{Numerical Examples}
\subsection{Convergence Study}
We consider an example where the surface $\Gamma$ is a ring torus and given by the zero level set of the the signed distance function $\rho=\sqrt{z^2+\lp \sqrt{x^2+y^2}-R \rp^2}-r$, with $R=1$ and $r=1/2$,  we choose $\alpha=1$, 
\begin{equation}
\beta=P_\Gamma (x^2yz, x, yz^3)
\end{equation}
and $f$ such that the exact solution is  
\begin{align}\label{eq:exactsol}
u&=(0.5x+(x-1)^2+0.5y+(y-1))e^{(-x(x-1)-y(y-1))}
\end{align}
\begin{figure}[h]
\begin{center} 
\includegraphics[width=1\textwidth]{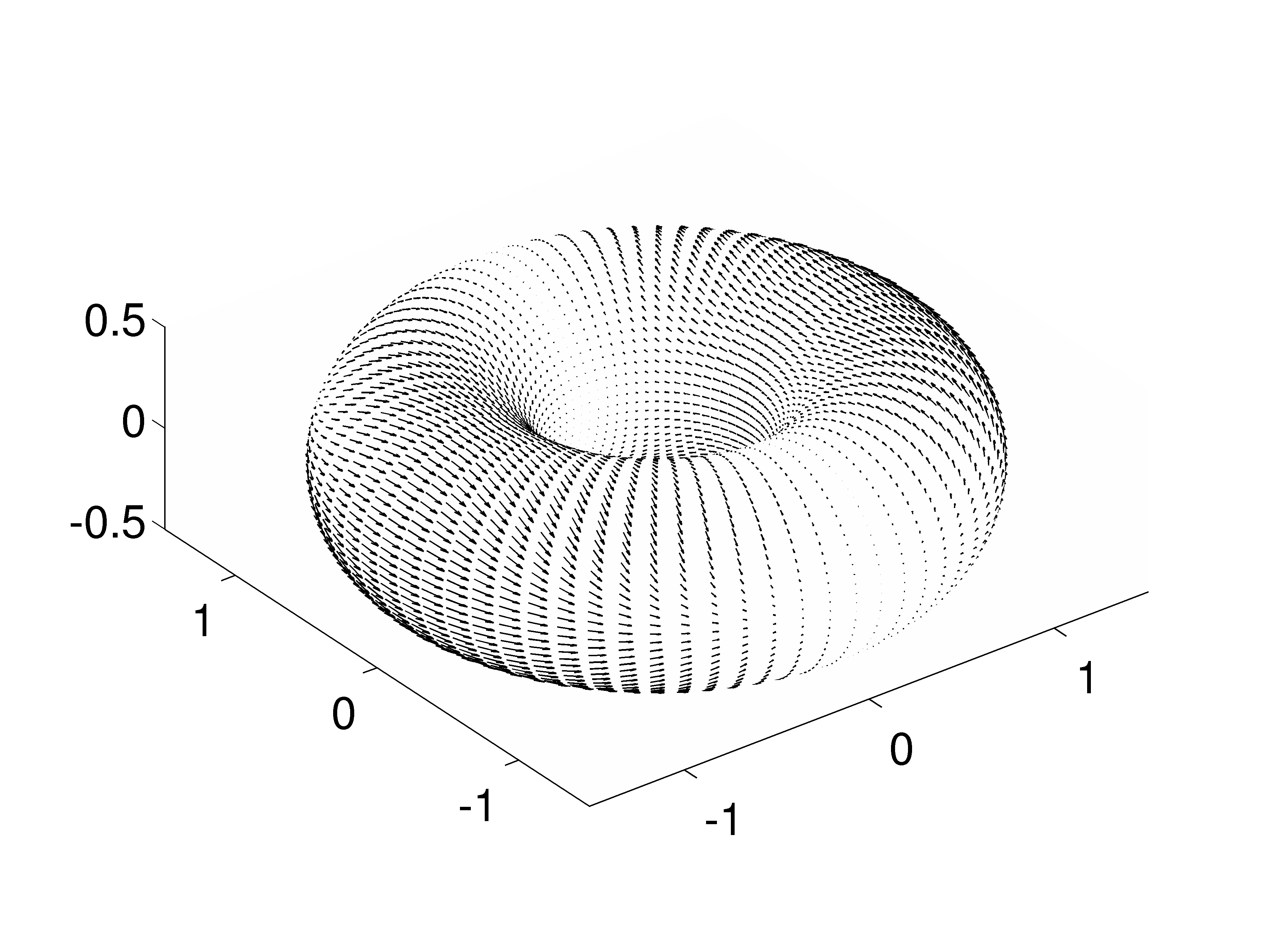}
\caption{The vector field $\beta$ on the torus.  \label{fig:beta}}
\end{center}
\end{figure}

The vector field $\beta$ is shown in Fig.~\ref{fig:beta}. A structured mesh $\mcT_{0,h}$ consisting of tetrahedra on the domain $[-1.6, 1.6] \times [-1.6, 1.6] \times [-0.6, 0.6]$ is generated independently of the position of the torus. The mesh size parameter is defined as $h=h_x=h_y=h_z$. An approximate distance function $\rho_h$ is constructed using the nodal interpolant $\pi_h \rho$ on the background mesh and $\Gamma_h$ is the zero levelset of $\rho_h$ and $n_h$ is the piecewise constant unit normal to $\Gamma_h$. The triangulation of $\Gamma_h$ is shown in Fig.~\ref{fig:meshtorus}. We use the proposed method with the stabilization parameter chosen as $c_F=10^{-2}$. A direct solver was used to solve the linear systems. The solution $u_h$ for $h=0.2$ is shown in Fig.~\ref{fig:sol}. We compare our approximation $u_h$ with the exact solution $u$ given in equation~\eqref{eq:exactsol}. The convergence of $u_h$ in  the $L^2$ norm and the energy norm are shown in Fig.~\ref{fig:conv}. We observe second order convergence in the $L^2$ norm and as expected a convergence order of $1.5$ in the energy norm. The error in the gradient $\| \nablash(u^e - u_h) \|_{\mcK_h}$ versus mesh size is shown in Fig.~\ref{fig:convgrad} and the observed convergence order is slightly better than $3/4$ for the finest meshes. 
\begin{figure}
\begin{center} 
\includegraphics[width=0.6\textwidth]{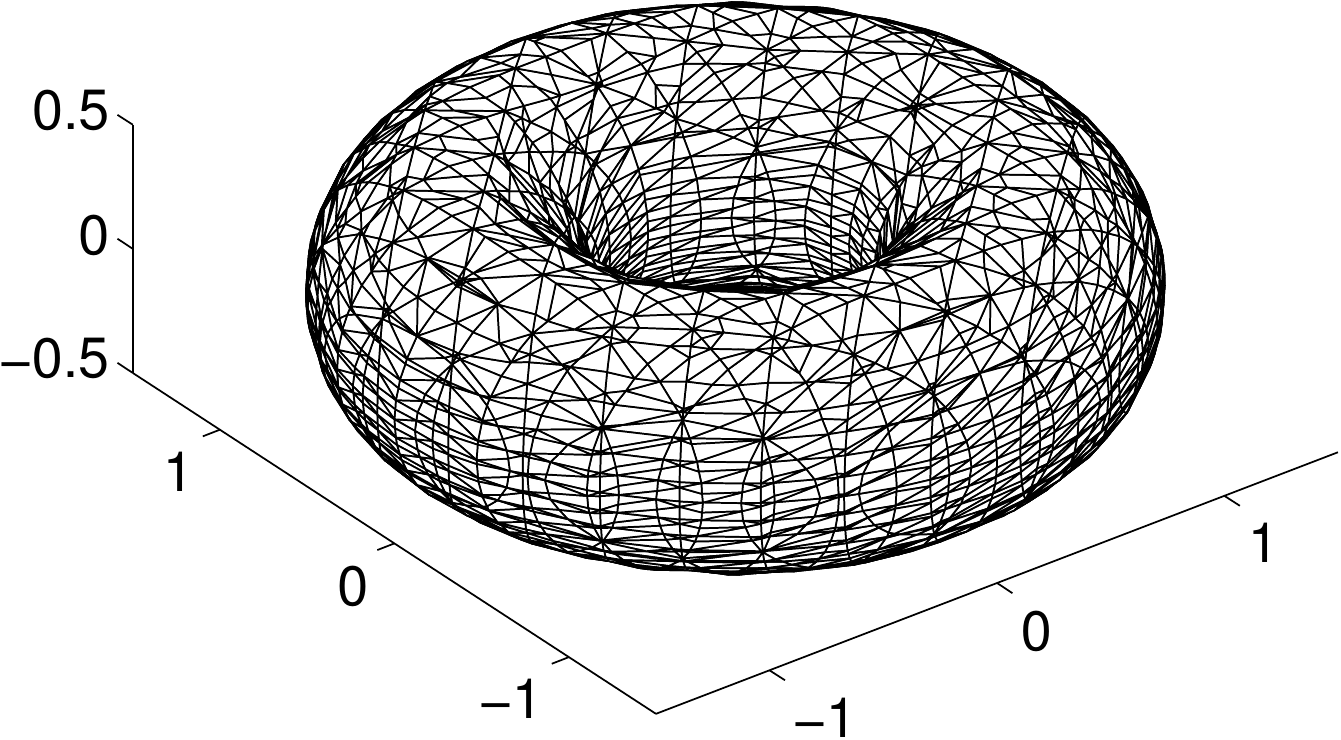}
\caption{The triangulation of $\Gamma_h$ for $h=0.2$.  \label{fig:meshtorus}}
\end{center}
\end{figure}

\begin{figure}
\begin{center} 
\includegraphics[width=0.7\textwidth]{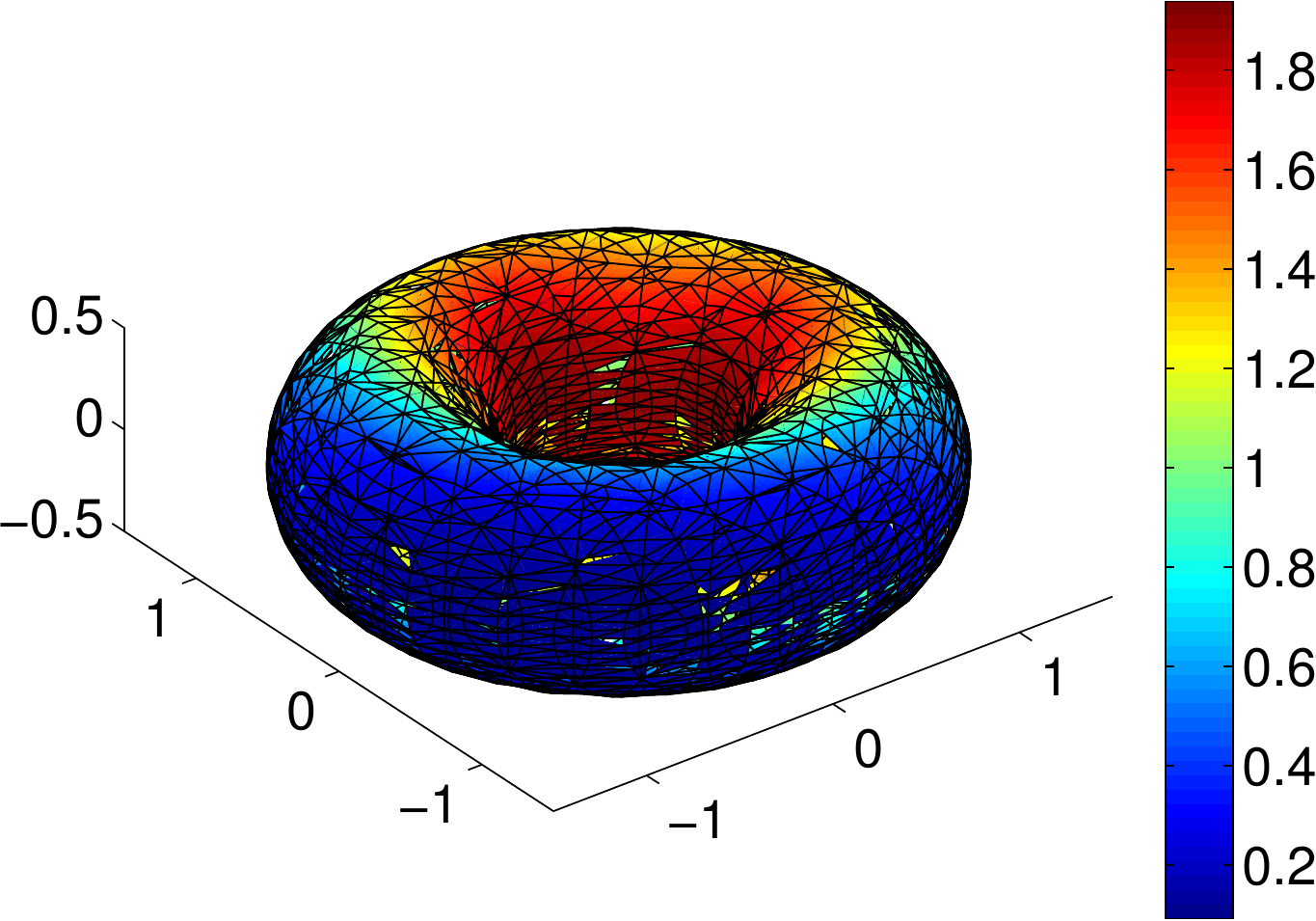}
\caption{The solution $u_h$ for $h=0.2$. \label{fig:sol}}
\end{center}
\end{figure}

\begin{figure}
\begin{center} 
\includegraphics[width=0.7\textwidth]{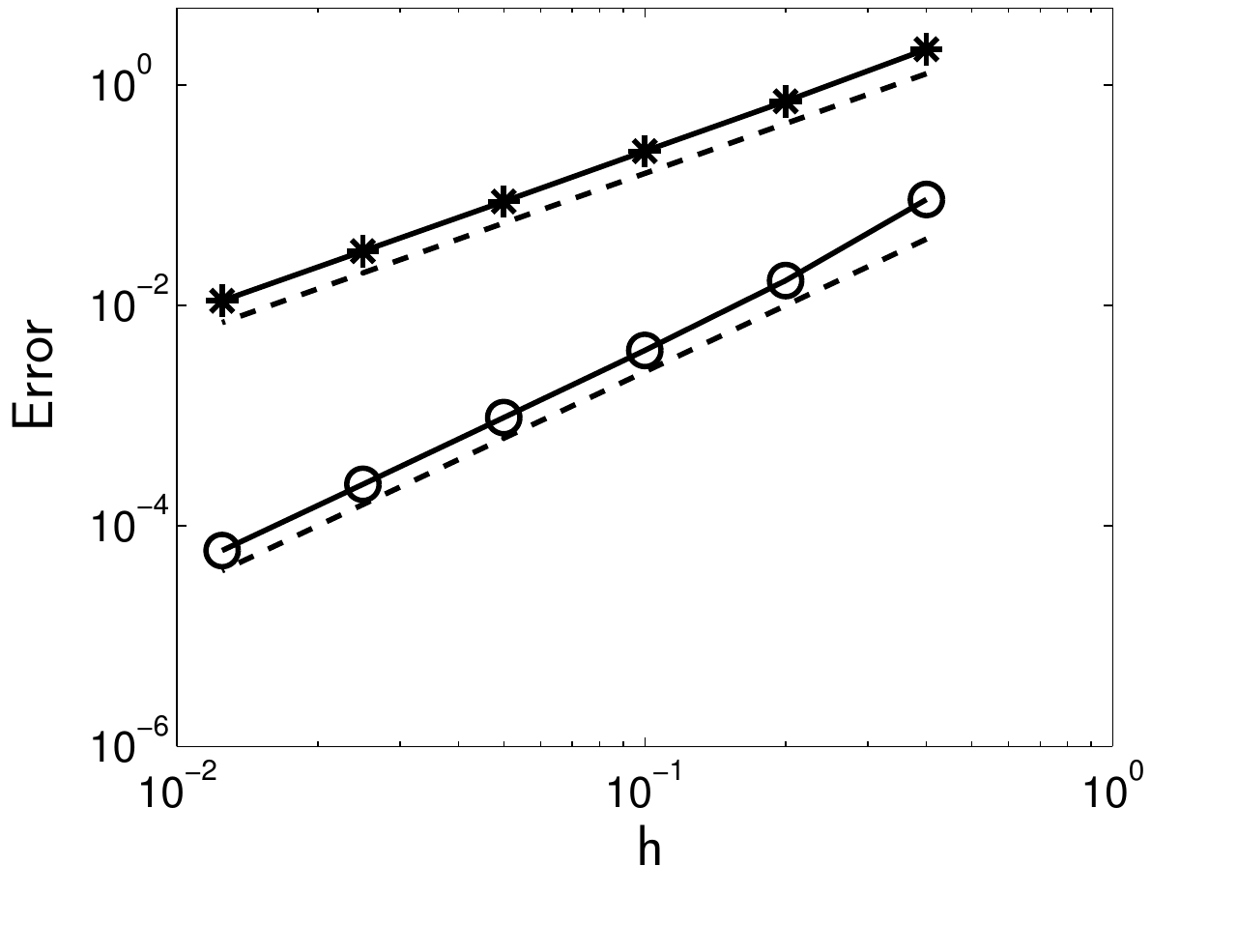}
\caption{Convergence of $u_h$.  Circles represent the error measured in the $L^2$ norm ($\|u^e - u_h \|_{\mcK_h} $) and stars represent the error in the energy norm ($\tn u^e - u_h \tn_h$). The dashed lines are proportional to  $h^{2}$ and $h^{3/2}$. 
\label{fig:conv}}
\end{center}
\end{figure}

\begin{figure}
\begin{center} 
\includegraphics[width=0.7\textwidth]{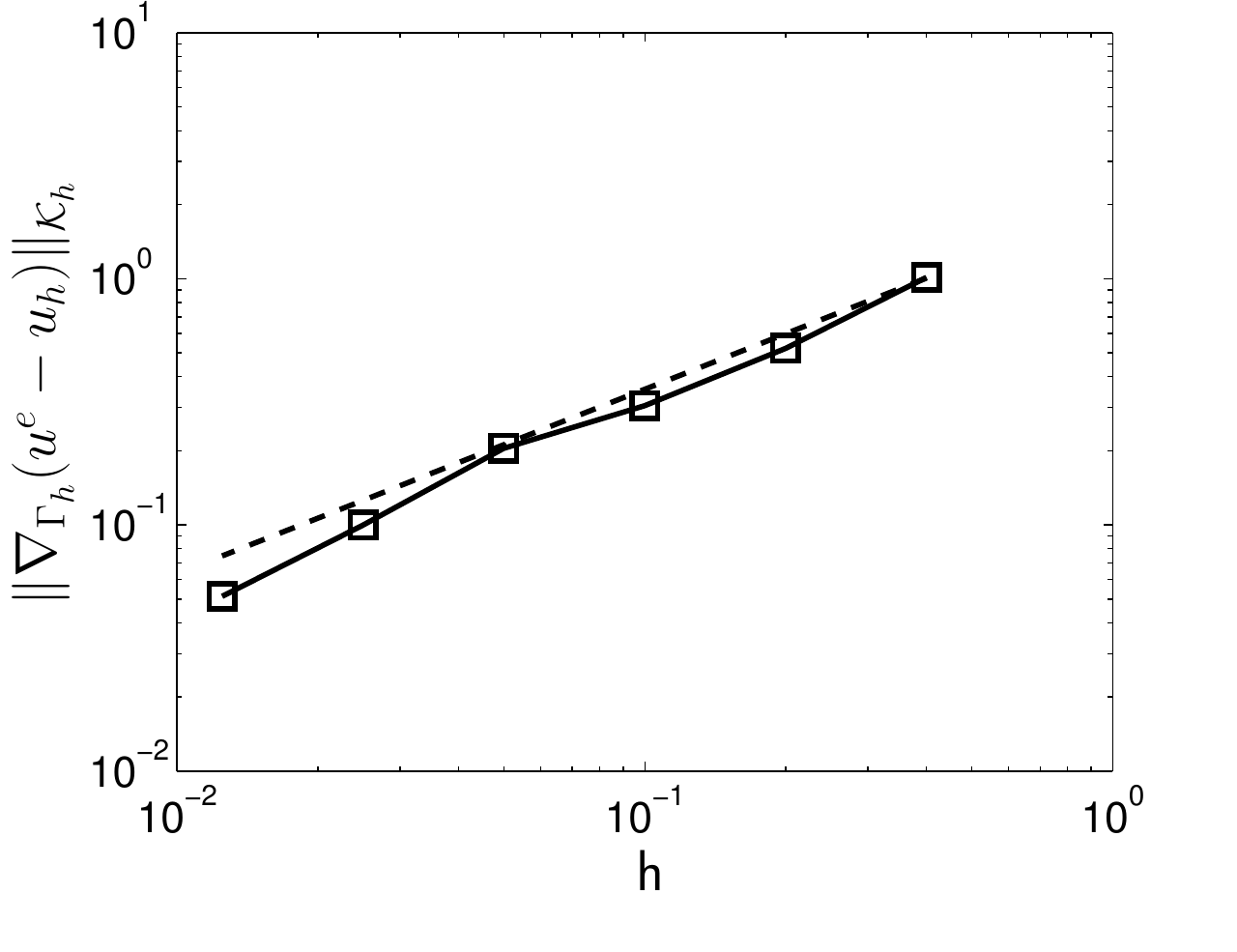}
\caption{Convergence of $\| \nablash(u^e - u_h) \|_{\mcK_h}$. The dashed line is proportional to $h^{3/4}$.
\label{fig:convgrad}}
\end{center}
\end{figure}

\subsection{Condition Number Study}

We compute the condition number of the stiffness matrix for a sequence of uniformly refined meshes and different values of the stabilization parameter $c_F$. We find that the asymptotic behavior as the meshsize 
tend to zero is $O(h^2)$, while on coarser meshes for larger values of 
$c_F$ the behavior is closer to $O(h)$.  

\begin{figure}
\begin{center} 
\includegraphics[width=0.6\textwidth]{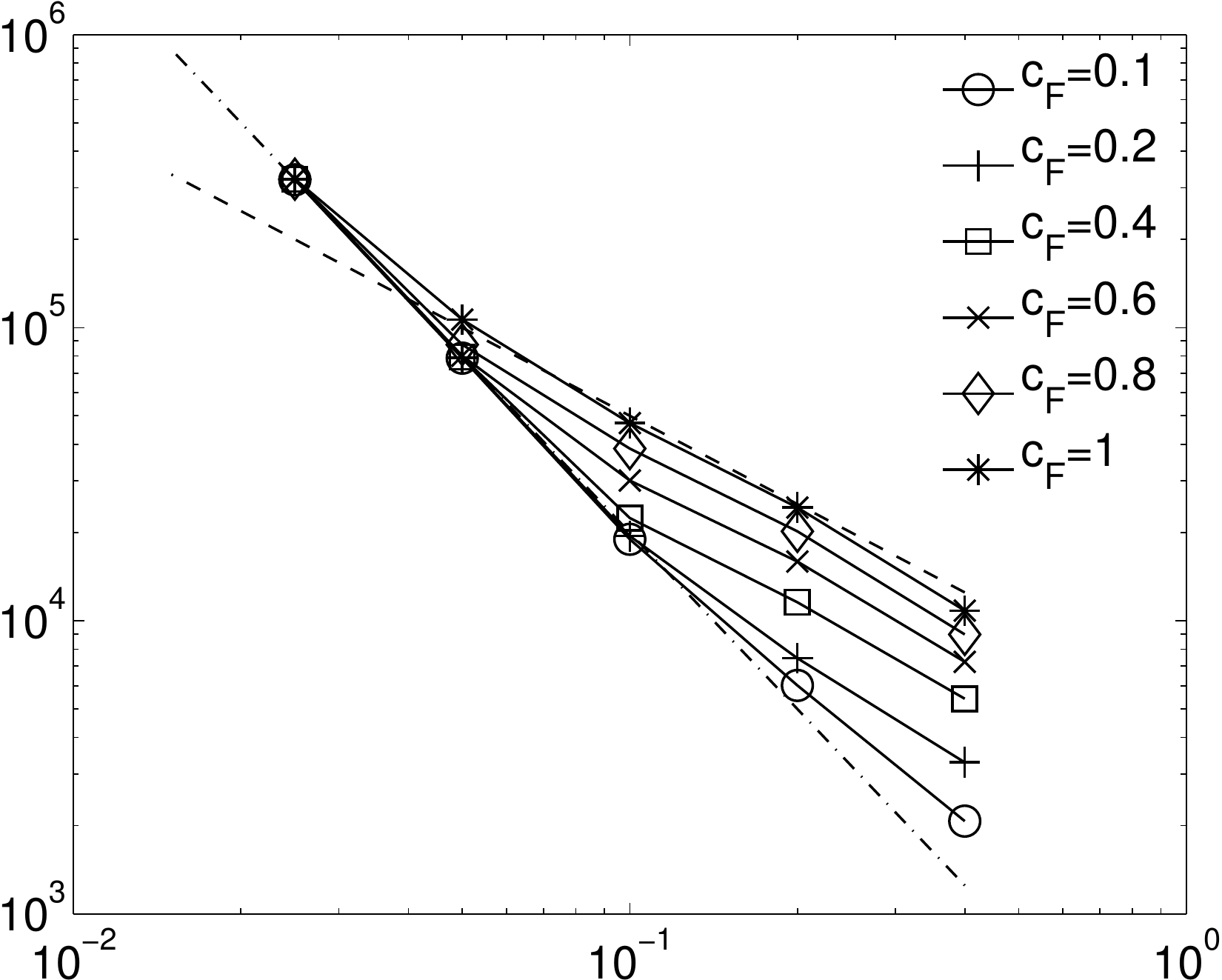}
\caption{The condition number of the matrix versus mesh size. The dashed lines are proportional to $h^{-1}$ and $h^{-2}$. \label{fig:cond}}
\end{center}
\end{figure}

%

\bibliographystyle{plain}
  \bibliography{ref}
\end{document}